\newtheorem{theo}{Theorem}
\newtheorem{prop}{Proposition}[section]
\numberwithin{equation}{section}
\DeclareMathOperator{\Imag}{Im}
\DeclareMathOperator{\loc}{loc}
\DeclareMathOperator{\comp}{comp}
\DeclareMathOperator{\const}{const}
\DeclareMathOperator{\supp}{supp}
\DeclareMathOperator{\Div}{div}
\DeclareMathOperator{\Tr}{Tr}
\DeclareMathOperator{\Vol}{Vol}
\title[Exponential decay for Kerr--de Sitter]%
{Exponential energy decay\\
for Kerr--de Sitter black holes\\
beyond event horizons}
\author{Semyon Dyatlov}
\email{dyatlov@math.berkeley.edu}
\address{Department of Mathematics, Evans Hall, University of California,
Berkeley, CA 94720, USA}
\begin{document}

\begin{abstract}
We establish an exponential decay estimate for linear waves on the
Kerr--de Sitter slowly rotating black hole. Combining the cutoff
resolvent estimate of~\cite{skds} with the red-shift effect and a
parametrix near the event horizons, we obtain exponential decay on the
whole domain of outer communications.
\end{abstract}

\maketitle

We study decay of linear waves on the Kerr--de Sitter metric,
corresponding to a rotating black hole in a spacetime with positive
cosmological constant. (See for example~\cite{d-r} for the motivation
for the problem and a survey of recent results.)  Although in the
original coordinates $(t,r,\theta,\varphi)$ the metric is only defined
on $M=\{r_-<r<r_+\}$ and becomes singular on the event horizons
$\{r=r_\pm\}$, we will use a different coordinate system
$(t_+,r,\theta,\varphi_+)$, in which the metric can be extended beyond
the event horizons to $M_\delta=\{r_--\delta<r<r_++\delta\}$.  In
particular, $t_+\sim t+C_\pm \ln|r-r_\pm|$ near $r=r_\pm$, with
$C_\pm$ positive constants; see~\eqref{e:kerr-star} for precise
formulas and Section~1.2 in general for the description of the metric.
The speed of rotation of the black hole is described by the parameter
$a$; for $a=0$, we get the spherically symmetric Schwarzschild--de
Sitter metric. We establish the following
\begin{theo}\label{l:integrated-xpd}
Let $\Box_g$ be the d'Alembert--Beltrami operator of the Kerr--de
Sitter metric on $M_\delta$.  Fix $\varkappa>0$. For $a,\delta>0,\nu>0$ small enough
and any $s\geq 1$, there exists a constant $C$ such that if $u\in
H^{s+\varkappa+1}_{\loc}(M_\delta)$ is a solution to the equation
$\Box_g u=f\in H^{s+\varkappa}_{\loc}(M_\delta)$, with $\supp u\subset
\{t_+>-T\}$ for some $T$, then
\begin{equation}\label{e:main-estimate}
\|e^{\nu t_+}(u-\Pi_0f)\|_{H^s(M_\delta)}\leq
C \|e^{\nu t_+}f\|_{H^{s+\varkappa}(M_\delta)}.
\end{equation}
Here
$$
\Pi_0f={1+\alpha\over 4\pi(r_+^2+r_-^2+2a^2)}\int_M f\,d\Vol
$$
is a constant (note that we integrate over $M$, not the whole
$M_\delta$); $H^s$ norms are taken with respect to the
$(t_+,r,\theta,\varphi_+)$ coordinates (here $(\theta,\varphi_+)$ are treated
as spherical coordinates on $\mathbb S^2$).
\end{theo}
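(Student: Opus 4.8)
The plan is to pass to the frequency domain in $t_+$ and to reduce \eqref{e:main-estimate} to the meromorphic continuation and polynomial bounds of a stationary resolvent on a contour slightly below the real axis. Because the metric is independent of $t_+$, the Fourier--Laplace transform $\hat u(\omega)=\int e^{i\omega t_+}u\,dt_+$ conjugates $\Box_g$ into a holomorphic family of second-order operators $P(\omega)$ on the spatial slice $X=\{r_--\delta<r<r_++\delta\}\times\mathbb S^2$, with $P(\omega)\hat u(\omega)=\hat f(\omega)$. A crude energy estimate, using $\supp u\subset\{t_+>-T\}$, shows that $u$ grows at most exponentially in $t_+$, so $\hat u(\omega)$ is holomorphic and square-integrable on a high line $\{\Imag\omega=C_0\}$, where $\hat u(\omega)=R(\omega)\hat f(\omega)$ with $R(\omega)=P(\omega)^{-1}$. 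Since $e^{\nu t_+}f\in H^{s+\varkappa}$, the transform $\hat f(\omega)$ is holomorphic on $\{\Imag\omega>-\nu\}$ and square-integrable on $\{\Imag\omega=-\nu\}$, and by Plancherel the bound \eqref{e:main-estimate} is equivalent to a square-integral estimate for $\widehat{u-\Pi_0f}$ on that same contour. It therefore suffices to continue $R(\omega)\hat f(\omega)$ down from $\Imag\omega=C_0$ to $\Imag\omega=-\nu$ and to estimate it there.

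First I would control the region bounded away from the event horizons. There the cutoff resolvent estimate of~\cite{skds} applies: for $a$ and $\nu$ small enough, the cutoff resolvent $\chi R(\omega)\chi$, with $\chi$ supported in $\{r_-<r<r_+\}$, continues holomorphically into $\{\Imag\omega\ge-\nu\}$ with at most polynomial growth in $\omega$, save for a simple pole at $\omega=0$. The polynomial growth is precisely what converts, through the $t_+$-integral, into the fixed loss of $\varkappa$ derivatives appearing on the right-hand side of \eqref{e:main-estimate}. The pole at $\omega=0$ corresponds to the constant resonant state; its residue is a rank-one projector, and pairing $f$ against the dual resonant state and the volume form on $M$ produces exactly $\Pi_0f$ with the stated normalization $(1+\alpha)/\bigl(4\pi(r_+^2+r_-^2+2a^2)\bigr)$. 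Shifting the contour from $\Imag\omega=C_0$ to $\Imag\omega=-\nu$ and subtracting this residue removes the only obstruction to decay, leaving $\widehat{u-\Pi_0f}$ holomorphic up to and including the contour $\{\Imag\omega=-\nu\}$.

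Next I would extend this control across and beyond the horizons, where the estimate of~\cite{skds} says nothing. In the $(t_+,r,\theta,\varphi_+)$ coordinates the family $P(\omega)$ extends smoothly to $r=r_\pm\pm\delta$ and the event horizons are characteristic hypersurfaces; the positivity of the surface gravities makes them generalized radial sources at which the red-shift effect produces a favorable sign in the relevant positive-commutator estimate. This yields, uniformly along the contour, a local estimate near each horizon bounding $\hat u(\omega)$ there by $\hat f(\omega)$ together with $\hat u(\omega)$ in a slightly smaller interior region. I would then build an explicit parametrix for $P(\omega)$ in a collar neighborhood of each horizon, exploiting the regular-singular structure of the radial operator there, and glue it to the interior resolvent by a partition of unity to obtain a right inverse of $P(\omega)$ on all of $X$, modulo a smoothing error, valid down to $\Imag\omega=-\nu$.

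The main obstacle is exactly this near-horizon analysis and its uniform-in-$\omega$ gluing to the interior. The delicate point is to match the microlocal radiation conditions imposed at the horizons by the red-shift with the outgoing behavior already built into the continued interior resolvent, so that the glued parametrix inverts $P(\omega)$ modulo a compact, $\omega$-smoothing remainder that can be absorbed by a Neumann-series argument without destroying the polynomial bounds; one must also check that the red-shift estimate does not degenerate as $|\omega|\to\infty$ along the contour. Once the resolvent is continued and controlled on $\{\Imag\omega=-\nu\}$ with the $\varkappa$-derivative loss, assembling the interior and near-horizon pieces and applying the inverse Fourier transform in $t_+$, via Plancherel, yields \eqref{e:main-estimate}.
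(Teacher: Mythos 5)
Your interior analysis matches the paper's Proposition~\ref{l:estimate-k-k}: Fourier--Laplace transform in time, the cutoff resolvent of~\cite{skds} holomorphic and polynomially bounded in $\{\Imag\omega\geq-\nu\}$ except for the pole at $\omega=0$ whose rank-one residue yields $\Pi_0f$, contour deformation, and the polynomial growth absorbed as a loss of $\varkappa$ derivatives via a semiclassical elliptic parametrix. That part is sound. But everything you propose at and beyond the event horizons is a program, not a proof, and it is precisely the crux of the theorem. You acknowledge this yourself (``the main obstacle is exactly this near-horizon analysis and its uniform-in-$\omega$ gluing''), but then you do not supply the radial-point/positive-commutator estimates, the Fredholm setup for the extended stationary operator $P(\omega)$, the high-frequency non-degeneracy of the red-shift estimate along the contour, or the Neumann-series absorption of the gluing error. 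Each of these is a substantial piece of analysis (essentially the Vasy/Datchev--Vasy machinery), and without them the resolvent of $P(\omega)$ on the full slice $X$ is not constructed, so the contour shift on $M_\delta$ is not justified. There is also a preliminary issue you gloss over: the surfaces $\{t_+=\const\}$ are null near the horizons, so even the well-posedness of the forward problem on $M_\delta$ (needed before taking any transform there) requires an adjustment of the time function.

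The paper avoids this entire frequency-domain difficulty by staying in physical space near the horizons. Two ingredients replace your stationary near-horizon parametrix. First, Proposition~\ref{l:red-shift}: a stationary timelike vector field $X$ with $K^X$ negative definite near the horizons (the red-shift multiplier), giving the weighted energy estimate
$\|e^{\nu t_+}u\|_{H^{s+1}(M_\delta)}\lesssim\|e^{\nu t_+}f\|_{H^s(M_\delta)}+\|e^{\nu t_+}u\|_{H^{s+1}(K_\delta)}$,
with higher derivatives handled by commuting with Killing fields and with $\partial_r$ (the commutator producing a term of favorable sign). This propagates the interior decay of Proposition~\ref{l:estimate-k-k} out to the horizons when $\supp f\subset K_\delta$. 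Second, for general $f$, the paper solves a Dirichlet problem for $\Box_g$ in the collar $M_\delta\setminus K_{2\delta}$ with boundary data zero on the timelike surface $\partial K_{2\delta}$; the same vector field $X$ together with Proposition~\ref{l:morawetz-positive-integral} shows this solution $u_1$ decays exponentially (there is no trapping in the collar), and subtracting $\chi_\delta u_1$ reduces to the case $\supp f\subset K_\delta$. If you want to salvage your route, you must either import the full stationary near-horizon theory or replace it, as the paper does, with these time-domain multiplier arguments.
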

The main ingredient of the proof, which gives us exponential decay, is
the scattering resolvent constructed in~\cite{skds}. We modify the
argument of~\cite[Theorem~6]{skds} to get exponential decay for $u$ on
a certain compact subset $K_\delta\subset M$, under the condition that
$f$ is supported in $K_\delta$ as well
(Proposition~\ref{l:estimate-k-k}).  In the present paper, we specify
$\Box_g u$, rather than the Cauchy data of $u$, to facilitate the
proofs. However, it is not hard to convert
Theorem~\ref{l:integrated-xpd} to an exponential decay estimate for
the Cauchy problem, such as the one given in~\cite{skds}.

The second ingredient, described in Section~1.3, is the energy
estimate produced by red-shift effect at the event horizons, first
introduced by Dafermos and Rodnianski in~\cite{d03,d-r05,d-r08}.
The paper~\cite{d03} in particular introduced the idea that the red-shift
implies that boundedness and decay properties propagate from the event
horizon to a neighborhood of it in the black hole interior. The vector
field approach to the red-shift effect was introduced in~\cite{d-r05};
in~\cite{d-r08}, the method was extended to higher order estimates
using the remark that commutation generated further terms of favorable sign.
The paper~\cite{d-r05} established red-shift estimates for the Schwarzschild
black hole, while~\cite{d-r08} considered the case of slowly rotating Kerr;
the (subextremal) Kerr--de Sitter horizons are in particular covered
by~\cite[Theorems~7.1 and~7.2]{d-r}. It should also be noted that
in certain cases, such as extremal Reissner--Nordstr\"om spacetimes
considered by Aretakis~\cite{ar11,ar11-2}, the failure of the red-shift
is directly related to instabilities of linear waves at the event horizon.

In our presentation, we follow both~\cite[Section~3.3]{d-r} and the
paper~\cite{t-t} by Tataru and Tohaneanu on integrated decay for the
Kerr black hole.  Combining the red-shift effect with
Proposition~\ref{l:estimate-k-k}, we obtain an estimate on $u$ on the
whole $M_\delta$, provided that $f$ is still supported in
$K_\delta$. Finally, we use a Morawetz type argument together with
red-shift (Proposition~\ref{l:parametrix-estimate}) to construct an
exponentially decaying parametrix for the wave equation near the event
horizons and reduce the general problem to the case $\supp f\subset
K_\delta$.

Compared to the energy estimate for the Minkowski spacetime
(Proposition~\ref{l:minkowski-estimate}), we lose $1+\varkappa$
derivatives in Theorem~1, where $\varkappa>0$ can be arbitrarily
small.  This is related to the exponent in the polynomial resolvent
estimate of~\cite{skds}, which in turn is determined by the separation
of variables procedure employed there~\cite[Proposition~3.4]{skds}.
It is possible that a more careful analysis will yield a smaller loss
in derivatives; however, the presence of trapping indicates that loss
of regularity is inevitable (see~\cite{r} for a precise statement in
the now classical case of obstacle scattering).

Exponential decay of linear waves on the Schwarzschild--de Sitter
metric has been studied in~\cite{b-h,d-r2,m-sb-v}.  Dafermos and
Rodnianski~\cite{d-r2}, using vector field multipliers, proved that
linear waves decay faster than every negative power of $t_+$.  Bony
and H\"afner~\cite{b-h}, building on earlier work on the scattering
resolvent by S\'a~Barreto and Zworski~\cite{sb-z}, showed exponential
decay away from the event horizons. Finally, Melrose, S\'a~Barreto,
and Vasy~\cite{m-sb-v} proved exponential decay up to the event
horizons.  The latter result, combined with the recent work on
normally hyperbolic trapping~\cite{w-z} and gluing semiclassical
resolvent estimates~\cite{d-v}, can be applied to certain short-range
stationary perturbations of the Schwarzschild--de Sitter spacetime;
see~\cite[Corollary~6.1]{d-v}.  It should be noted, however, that
Kerr--de Sitter is not an acceptable perturbation, in particular
because the theorem of Mazzeo and Melrose~\cite{m-m} does not apply to
the low energy situation anymore. Therefore, at the moment, the
results of~\cite{skds} seem necessary for obtaining exponential decay
of waves on Kerr--de Sitter.

\section{Kerr--de Sitter metric and the red-shift effect}

\subsection{Energy estimates}

We recall some well-known facts from Lorentzian geometry; see for
example~\cite[Appendices]{d-r} or~\cite[Section~2.8]{tay} for a more
detailed account.

Let $M$ be an $n$-dimensional smooth manifold and $g$ be a Lorentzian
metric; that is, a symmetric $(0,2)$-tensor $g$ of signature
$(1,n-1)$. (Sometimes a different convention is used, in which the
metric has signature $(n-1,1)$.)  The basic example is the space
$\mathbb R_{t,x}^n$ with the Minkowski metric
$$
dt^2-\sum_{j=1}^{n-1} dx_j^2.
$$
A tangent vector $X$ is called timelike if $g(X,X)>0$, null if
$g(X,X)=0$, and spacelike if $g(X,X)>0$. If $X$ and $Y$ are two
timelike vectors, then we say that they point in the same direction if
$g(X,Y)>0$ and they point in opposite directions if $g(X,Y)<0$. This
definition can be extended to cases when $X$ and/or $Y$ is a nonzero
null vector. A hyperplane in a tangent space is called spacelike if
its normal vector (with respect to $g$) is timelike, timelike
if its normal vector is spacelike, and null if its
normal vector is null. Note that the restriction of $g$ to a spacelike
hyperplane is negative definite, while the restriction to a null plane
has signature $(0,n-2)$.

We now describe a way of obtaining energy estimates for the wave
equation on Lorentzian manifolds. Let $\Omega\subset M$ be a bounded
domain and $u\in C^\infty(\bar\Omega)$. Define the symmetric
$(0,2)$-tensor $T_{\nabla u}$ by the formula
\begin{equation}\label{e:stress-energy}
T_{\nabla u}(X,Y)=(Xu)(Yu)-{1\over 2}g(\nabla u,\nabla u)g(X,Y),
\end{equation}
valid for all vector fields $X,Y$ on $\Omega$. Note that for fixed $X$
and $Y$, $T_{\nabla u}(X,Y)$ is a quadratic form in $\nabla u$.  If
$X$ and $Y$ are both timelike, then this form is positive definite in
$\nabla u$ for $X$ and $Y$ pointing in the same direction and negative
definite otherwise. Same is true if $X$ and/or $Y$ is null, with the
form being nonnegative or nonpositive, respectively.

Fix a vector field $X$ on $\Omega$ and consider the vector
field $J_X(u)$, given by the formula
$$
g(J_X(u),Y)=T_{\nabla u}(X,Y),
$$
valid for all vector fields $Y$. The divergence theorem then gives
\begin{equation}\label{e:divergence-theorem}
\int_{\partial\Omega} T_{\nabla u}(X,\vec n)\,dS
=\int_{\Omega}\Div J_X(u)\,d\Vol.
\end{equation}
Here $\vec n$ is the unit normal vector pointing outward (in the sense
that $g(\vec n,Z)>0$ for every vector $Z$ pointing outside of
$\Omega$); $dS$ is the area measure induced by the restriction of $g$
to $\partial\Omega$, and $d\Vol$ is the volume measure induced by
$g$. One has to take care when defining the left-hand side
of~\eqref{e:divergence-theorem} at the points where $\partial\Omega$
is null, as $\vec n$ blows up, being both unit and null, and $dS$ is
equal to zero; see~\cite[Appendix~C]{d-r} for details.  The discussion
following~\eqref{e:stress-energy} implies
\begin{prop}\label{l:current-positive-integral}
Let $\mathcal C$ be an open subset of $\partial\Omega$ whose tangent
space is either spacelike or null at every point. Moreover, assume
that $X$ is timelike and points outside of $\Omega$ on $\mathcal
C$. Then for every $u$,
\begin{equation}\label{e:positive-flux}
\int_{\mathcal C}T_{\nabla u}(X,\vec n)\,dS\geq 0.
\end{equation}
\end{prop}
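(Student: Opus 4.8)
The plan is to reduce \eqref{e:positive-flux} to pointwise nonnegativity of the integrand $T_{\nabla u}(X,\vec n)$ along $\mathcal C$, after which the inequality follows by integrating against the nonnegative area measure $dS$. The sign of the integrand is governed entirely by the discussion following \eqref{e:stress-energy}, which records, for fixed $X$ and $\vec n$, the definiteness of the quadratic form $\nabla u\mapsto T_{\nabla u}(X,\vec n)$ in terms of the causal types of $X,\vec n$ and of whether the two vectors point in the same direction. The one structural fact I would feed into that discussion is that $X$ and $\vec n$ point in the same direction, i.e. $g(X,\vec n)>0$. This is immediate from the defining property of the outward normal: since $X$ is assumed to point outside of $\Omega$ on $\mathcal C$, the outward-normal condition $g(\vec n,Z)>0$ for every outward-pointing $Z$ applied to $Z=X$ gives $g(\vec n,X)>0$.

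Next I would split $\mathcal C$ according to the causal type of its tangent space. Where the tangent space is spacelike, the normal $\vec n$ is timelike; as $X$ is also timelike and $g(X,\vec n)>0$, the two point in the same direction, and the cited discussion makes $T_{\nabla u}(X,\vec n)$ a positive-definite form in $\nabla u$, hence $\geq 0$. Where the tangent space is null, $\vec n$ is null while $X$ remains timelike; the same relation $g(X,\vec n)>0$ now falls under the nonnegativity clause of that discussion (the case in which one of the two vectors is null), giving $T_{\nabla u}(X,\vec n)\geq 0$ again. Thus the integrand is nonnegative at every spacelike point of $\mathcal C$.

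The main obstacle is precisely the null locus of $\mathcal C$, where, as noted after \eqref{e:divergence-theorem}, the unit normal $\vec n$ blows up (being both unit and null) and the induced element $dS$ degenerates to zero, so the product $T_{\nabla u}(X,\vec n)\,dS$ must be interpreted carefully. Here I would abandon the unit-normal bookkeeping in favor of the intrinsic flux of the current $J_X(u)$: the $(n-1)$-form $\iota_{J_X(u)}\,d\Vol$ pulls back to a finite, well-defined form on $\partial\Omega$ irrespective of causal type, and its integral over $\mathcal C$ is by definition the left-hand side of \eqref{e:positive-flux}, following~\cite[Appendix~C]{d-r}. To verify the sign on the null part I would either pass to the limit from the spacelike portion of $\mathcal C$, or select a finite (non-unit) null normal together with the compatible measure for which the product agrees with $\iota_{J_X(u)}\,d\Vol$; either route reduces the computation to the same null, same-direction sign analysis above, so the null locus contributes nonnegatively and \eqref{e:positive-flux} holds for every $u$.
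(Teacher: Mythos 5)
Your proposal is correct and matches the paper's argument, which consists precisely of invoking the sign discussion following~\eqref{e:stress-energy} together with the outward-normal convention $g(\vec n,X)>0$ to get pointwise nonnegativity of the integrand. Your extra care on the null locus (interpreting the flux via $\iota_{J_X(u)}\,d\Vol$ rather than the degenerate pair $\vec n$, $dS$) is exactly the point the paper delegates to~\cite[Appendix~C]{d-r}, so the two arguments coincide.
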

The sign of the flux of $J_X$ over a timelike piece of
$\partial\Omega$ cannot be determined in general; however, we can find
it if $u$ satisfies a boundary condition:
\begin{prop}\label{l:morawetz-positive-integral}
Let $\mathcal C$ be an open timelike subset of $\partial\Omega$ and
assume that $u|_{\mathcal C}=0$. If $X$ points inside of $\Omega$ on
$\mathcal C$, then~\eqref{e:positive-flux} holds.
\end{prop}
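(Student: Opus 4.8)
The plan is to prove the pointwise inequality $T_{\nabla u}(X,\vec n)\geq 0$ at every point of $\mathcal C$, from which \eqref{e:positive-flux} follows by integration. The decisive structural input is the boundary condition $u|_{\mathcal C}=0$, which forces the gradient $\nabla u$ to be purely normal along $\mathcal C$; once this is known, the quadratic expression defining $T_{\nabla u}$ collapses and its sign can be read off directly from the causal characters of $X$ and $\vec n$.

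First I would extract the consequences of $u|_{\mathcal C}=0$. For any vector $T$ tangent to $\mathcal C$, the function $u$ is constant (equal to zero) in the direction $T$, so $Tu=g(\nabla u,T)=0$; hence $\nabla u$ is $g$-orthogonal to the entire tangent space $T_p\mathcal C$. Since $\mathcal C$ is timelike its normal $\vec n$ is spacelike, so $T_p\mathcal C$ is nondegenerate and its $g$-orthogonal complement is exactly the line spanned by $\vec n$. Therefore $\nabla u=\lambda\vec n$ along $\mathcal C$ for some scalar $\lambda$. Substituting this into \eqref{e:stress-energy}, using $Xu=\lambda g(\vec n,X)$, $\vec n u=\lambda g(\vec n,\vec n)$, and $g(\nabla u,\nabla u)=\lambda^2 g(\vec n,\vec n)$, the two terms combine to
\[
T_{\nabla u}(X,\vec n)=\tfrac12\,\lambda^2\,g(\vec n,\vec n)\,g(X,\vec n).
\]
Now I would read off the signs: $g(\vec n,\vec n)<0$ because $\vec n$ is spacelike, and $g(X,\vec n)<0$ because $X$ points inside of $\Omega$ while $\vec n$ is the outward normal (which satisfies $g(\vec n,Z)>0$ for outward $Z$, hence $g(\vec n,Z)<0$ for inward $Z$). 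The product of these two negative factors with the nonnegative $\tfrac12\lambda^2$ is nonnegative, so the integrand is $\geq 0$ and \eqref{e:positive-flux} holds.

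I do not anticipate a genuine obstacle here; the one point requiring care is the bookkeeping of signs, namely correctly invoking the convention that spacelike vectors have negative $g$-norm together with the stated characterization of the outward normal $\vec n$. It is worth noting that, in contrast to the null case flagged after \eqref{e:divergence-theorem}, no degeneration of $\vec n$ or $dS$ occurs, since $\mathcal C$ is timelike throughout; thus the pointwise argument is valid at every point of $\mathcal C$ and the conclusion follows without any limiting procedure.
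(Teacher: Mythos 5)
Your proof is correct and follows essentially the same route as the paper: both reduce to the pointwise identity $T_{\nabla u}(X,\vec n)=\tfrac12 v^2\,g(\vec n,\vec n)\,g(X,\vec n)$ after observing that the Dirichlet condition forces $\nabla u$ to be proportional to $\vec n$ along $\mathcal C$, and then read off the sign from the spacelike normal and the inward-pointing $X$. Your write-up merely spells out the justification that the tangent space of a timelike hypersurface is nondegenerate, which the paper leaves implicit.
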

\begin{proof}
We have $\nabla u=v\vec n$ on $\mathcal C$, for some function $v$.
Then
$$
T_{\nabla u}(X,\vec n)={v^2\over 2}g(X,\vec n)g(\vec n,\vec n)
=-{v^2\over 2}g(X,\vec n)\geq 0.\qedhere
$$
\end{proof}

Finally, we relate the divergence of $J_X$ to the d'Alembert--Beltrami
operator $\Box_g$:
\begin{prop}\label{l:energy-divergence}
Let $\mathcal L_X g$ be the Lie derivative of $g$ with respect to $X$,
and consider the symmetric $(0,2)$-tensor $K^X$ given by
$$
K^X={1\over 2}\mathcal L_X g-{1\over 4}\Tr(g^{-1}\mathcal L_X g)g.
$$
Then
$$
\Div J_X(u)=(Xu)\Box_g u+K^X(\nabla u,\nabla u).
$$
\end{prop}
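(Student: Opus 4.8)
The plan is to pass to local coordinates and work with the Levi-Civita connection $\nabla_a$ of $g$, writing the stress-energy tensor as
$$
T_{ab}=(\nabla_a u)(\nabla_b u)-\tfrac12 g_{ab}\,g^{cd}(\nabla_c u)(\nabla_d u).
$$
The defining relation $g(J_X(u),Y)=T_{\nabla u}(X,Y)$ then reads $(J_X(u))^a=T^a{}_b X^b$ with $T^a{}_b=g^{ac}T_{cb}$, and the Leibniz rule for the divergence gives
$$
\Div J_X(u)=\nabla_a\big(T^a{}_b X^b\big)=(\nabla_a T^a{}_b)\,X^b+T^a{}_b\,\nabla_a X^b.
$$
The statement thus splits into identifying the first term with $(Xu)\Box_g u$ and the second with $K^X(\nabla u,\nabla u)$.

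First I would compute the divergence of the stress-energy tensor. Expanding $\nabla_a T^a{}_b$ and using $\nabla_a\nabla^a u=\Box_g u$, the term $(\Box_g u)(\nabla_b u)$ appears immediately, while the remaining terms $(\nabla^a u)(\nabla_a\nabla_b u)$ and $-(\nabla^c u)(\nabla_b\nabla_c u)$ cancel because the Hessian of the \emph{scalar} $u$ is symmetric: second covariant derivatives commute on functions, with no curvature correction. This yields the conservation identity $\nabla_a T^a{}_b=(\Box_g u)(\nabla_b u)$, and contracting with $X^b$ produces $(Xu)\Box_g u$.

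For the second term I would lower an index, $T^a{}_b\nabla_a X^b=T^{ab}\nabla_a X_b$ (legitimate since $\nabla g=0$), and use the symmetry of $T^{ab}$ to replace $\nabla_a X_b$ by its symmetric part. The standard formula $(\mathcal L_X g)_{ab}=\nabla_a X_b+\nabla_b X_a$ then gives $T^{ab}\nabla_a X_b=\tfrac12 T^{ab}(\mathcal L_X g)_{ab}$. Substituting the two pieces of $T^{ab}$ and using $g^{ab}(\mathcal L_X g)_{ab}=\Tr(g^{-1}\mathcal L_X g)$, the result is exactly $K^X(\nabla u,\nabla u)$; the trace correction in the definition of $K^X$ is precisely what absorbs the contribution of the $-\tfrac12 g(\nabla u,\nabla u)g$ term in $T$.

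The computation is essentially routine once organized this way, so I do not expect a genuine obstacle. The one point requiring care is the cancellation of curvature-type terms in the second paragraph, which depends on $u$ being scalar so that $\nabla_a\nabla_b u=\nabla_b\nabla_a u$. I would also remark that both key identities used here, namely $\nabla_a T^a{}_b=(\Box_g u)\nabla_b u$ and $(\mathcal L_X g)_{ab}=\nabla_a X_b+\nabla_b X_a$, are coordinate-independent, so the resulting formula for $\Div J_X(u)$ holds invariantly.
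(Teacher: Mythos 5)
Your computation is correct and complete: the splitting $\nabla_a(T^a{}_bX^b)=(\nabla_aT^a{}_b)X^b+T^{ab}\nabla_aX_b$, the conservation identity $\nabla_aT^a{}_b=(\Box_gu)\nabla_bu$ via symmetry of the scalar Hessian, and the symmetrization $T^{ab}\nabla_aX_b=\tfrac12T^{ab}(\mathcal L_Xg)_{ab}$ reproducing $K^X(\nabla u,\nabla u)$ are exactly the standard argument. The paper states this proposition without proof, deferring to the references cited at the start of Section~1.1, and those references prove it the same way, so there is nothing to flag.
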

As a basic application, we prove the energy estimate for the
constant-coefficient wave equation:
\begin{prop}\label{l:minkowski-estimate}
Fix $0<T<R$ and consider the domain
$$
\Omega=\{0<t< T,\ |x|<R-t\}
$$
in the Minkowski spacetime.  Let $u\in C^\infty(\bar\Omega)$ and
define the energy
$$
E(s)={1\over 2}\int_{t=s\atop |x|<R-s} |u_t|^2+|\partial_x u|^2\,dx,\
0\leq s\leq T.
$$
Then
$$
E(T)\leq E(0)+\int_{\Omega} u_t \Box u\,dtdx.
$$
\end{prop}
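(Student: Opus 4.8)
The plan is to apply the divergence theorem~\eqref{e:divergence-theorem} with the timelike multiplier $X=\partial_t$ (note $g(\partial_t,\partial_t)=1$) and to identify $E(T)$ and $E(0)$ as boundary fluxes. First, since $\partial_t$ is a Killing field for the Minkowski metric we have $\mathcal L_{\partial_t}g=0$, so the tensor $K^{\partial_t}$ of Proposition~\ref{l:energy-divergence} vanishes and $\Div J_{\partial_t}(u)=u_t\,\Box u$. As $d\Vol=dt\,dx$ in these coordinates, the right-hand side of~\eqref{e:divergence-theorem} becomes exactly $\int_\Omega u_t\,\Box u\,dt\,dx$.

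Next I would split $\partial\Omega$ into the spacelike top $\{t=T,\ |x|<R-T\}$, the spacelike bottom $\{t=0,\ |x|<R\}$, and the null lateral cone $\{|x|=R-t,\ 0<t<T\}$. A direct computation from~\eqref{e:stress-energy} gives the energy density
$$
T_{\nabla u}(\partial_t,\partial_t)=\frac{1}{2}\big(|u_t|^2+|\partial_x u|^2\big).
$$
Checking the sign convention $g(\vec n,Z)>0$ for $Z$ pointing out of $\Omega$, the outward unit normal is $\vec n=\partial_t$ on the top and $\vec n=-\partial_t$ on the bottom, with $dS=dx$ on each; hence these two faces contribute $E(T)$ and $-E(0)$ to the left-hand side of~\eqref{e:divergence-theorem}.

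It remains to control the flux through the null cone. Along the cone the spatial radius $R-t$ decreases as $t$ grows, so $\partial_t$ points out of $\Omega$ there; being also timelike, it satisfies the hypotheses of Proposition~\ref{l:current-positive-integral}, which shows that this lateral flux is nonnegative. Assembling the three pieces, the divergence theorem reads
$$
E(T)-E(0)+\int_{\text{cone}}T_{\nabla u}(\partial_t,\vec n)\,dS=\int_\Omega u_t\,\Box u\,dt\,dx,
$$
and discarding the nonnegative lateral term yields the asserted inequality.

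I expect the only genuinely delicate point to be the null lateral boundary, where $\vec n$ and $dS$ degenerate as remarked after~\eqref{e:divergence-theorem}. Since the argument needs only the sign of that flux, Proposition~\ref{l:current-positive-integral} removes the difficulty without any explicit computation on the cone; beyond that, the sole care required is fixing the orientations of the outward normals on the two spacelike faces.
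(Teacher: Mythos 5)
Your argument is correct and coincides with the paper's own proof: both take $X=\partial_t$, use that it is Killing so $K^X=0$, identify the fluxes through the two spacelike caps as $E(T)$ and $-E(0)$, and discard the nonnegative flux through the null cone via Proposition~\ref{l:current-positive-integral}. No issues to flag.
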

\begin{proof}
We take $X=\partial_t$ and apply~\eqref{e:divergence-theorem} on
$\Omega$. Since $X$ is Killing, $K^X=0$ and thus
$$
\int_{\partial\Omega} T_{\nabla u}(X,\vec n)\,dS=\int_\Omega u_t\Box u\,dtdx.
$$
Now, the boundary of $\Omega$ consists of the following pieces:
$$
\begin{gathered}
\mathcal P_0=\{t=0,\ |x|<R\},\\
\mathcal P_T=\{t=T,\ |x|<R-T\},\\
\mathcal C=\{0\leq t\leq T,\ |x|=R-t\}.
\end{gathered}
$$
The integral over $\mathcal P_0$ is equal to $-E(0)$ and the integral
over $\mathcal P_T$ is equal to $E(T)$.  Finally, the integral over
$\mathcal C$ is nonnegative by
Proposition~\ref{l:current-positive-integral}, as $\mathcal C$ is null
and $\partial_t$ points outside of $\Omega$ on $\mathcal C$.
\end{proof}

\subsection{Kerr--de Sitter metric}

The Kerr--de Sitter metric is given by
$$
\begin{gathered}
g=-\rho^2\Big({dr^2\over \Delta_r}+{d\theta^2\over\Delta_\theta}\Big)\\
-{\Delta_\theta\sin^2\theta\over (1+\alpha)^2\rho^2}
(a\,dt-(r^2+a^2)\,d\varphi)^2\\
+{\Delta_r\over (1+\alpha)^2\rho^2}
(dt-a\sin^2\theta\,d\varphi)^2.
\end{gathered}
$$
Here $M_0$ is the mass of the black hole, $\Lambda$ is the cosmological
constant (both of which we assume to be fixed),
and $aM_0$ is the angular momentum (which we assume to be small);
$$
\begin{gathered}
\Delta_r=(r^2+a^2)\Big(1-{\Lambda r^2\over 3}\Big)-2M_0r,\\
\Delta_\theta=1+\alpha\cos^2\theta,\\
\rho^2=r^2+a^2\cos^2\theta,\
\alpha={\Lambda a^2\over 3}.
\end{gathered}
$$
The metric in the $(t,r,\theta,\varphi)$ coordinates is defined for
$\Delta_r>0$; we assume that $r_\pm$ are two positive roots of
the equation $\Delta_r=0$, such that $\Delta_r>0$ on
the open interval $0<r_-<r<r_+<\infty$. The variables $\theta\in [0,\pi]$ and
$\varphi\in \mathbb R/2\pi \mathbb Z$ are the spherical coordinates on
the sphere $\mathbb S^2$. The spacetime is then
$$
M=\mathbb R_t\times (r_-,r_+)\times \mathbb S^2_{\theta,\varphi}.
$$
(Note the difference in notation with~\cite{skds}.) The volume form is
$$
d\Vol={\rho^2\sin\theta\over (1+\alpha)^2}\,dtdrd\theta d\varphi.
$$
For $a=0$, we get the Schwarzschild--de Sitter metric:
$$
\begin{gathered}
g_0=-{r^2\over\Delta_r}dr^2
+{\Delta_r\over r^2}dt^2
-r^2 g_S,
\end{gathered}
$$
where
$$
g_S=d\theta^2+\sin^2\theta d\varphi^2
$$
is the round metric on the sphere of radius 1.

Next, we introduce a modification of the Kerr-star coordinates
(see~\cite[Section~5.1]{d-r}).  We remove the singularities at
$r=r_\pm$ by making the change of variables $(t,r,\theta,\varphi)\to
(t_+,r,\theta,\varphi_+)$, where
\begin{equation}\label{e:kerr-star}
t_+=t-F_t(r),\
\varphi_+=\varphi-F_\varphi(r).
\end{equation}
Note that $\partial_{t_+}=\partial_t$ and
$\partial_{\varphi_+}=\partial_\varphi$.  The functions $F_t$ and
$F_\varphi$ are required to be smooth on $(r_-,r_+)$ and satisfy the
following condition:
$$
F'_t(r)=\pm{(1+\alpha)(r^2+a^2)\over\Delta_r},\
F'_\varphi(r)=\pm{(1+\alpha)a\over\Delta_r}\text{ for }
|r-r_\pm|<\varepsilon.
$$
Here $\varepsilon>0$ is some fixed small constant. The metric $g$ in
the $(t_+,r,\theta,\varphi_+)$ coordinates takes the following form
for $|r-r_\pm|<\delta$:
$$
\begin{gathered}
-\rho^2{d\theta^2\over\Delta_\theta}
-{\Delta_\theta\sin^2\theta\over (1+\alpha)^2\rho^2}
(a\,dt_+-(r^2+a^2)\,d\varphi_+)^2\\
+{\Delta_r\over (1+\alpha)^2\rho^2}(dt_+-a\sin^2\theta\,d\varphi_+)^2
\pm {2\over (1+\alpha)}(dt_+-a\sin^2\theta\,d\varphi_+)dr.
\end{gathered}
$$
We see that the metric is smooth up to the event horizons $\{r=r_\pm\}$;
moreover, for $\delta$ small enough, we can extend it to
$$
M_\delta=\{r_--\delta\leq r\leq r_++\delta\}.
$$
The event horizons are null, while the surfaces $\{r=r_0\}$ are
spacelike for $r_0\not\in [r_-,r_+]$. The time surfaces
$\{t_+=\const\}$ are null near the event horizons; however, one can
shift the time variable a little bit (see~\cite[Section~1]{skds}) to
make the problem
$$
\Box_g u=f\in C_0^\infty(M_\delta),\
\supp u\subset \{t_+> -T\}\text{ for some }T
$$
well-posed. We call $u$ the forward solution of the equation $\Box_g u=f$.

Finally, note that the field $\partial_t$ (which is the same in the
$(t,r,\theta,\varphi)$ and $(t_+,r,\theta,\varphi_+)$ coordinates) is
not timelike on $M$ inside the two surfaces located $O(a)$-close (in the $r$ variable)
to the event horizons; these surfaces are called the ergospheres.

\subsection{Red-shift effect}

In this section, we prove the following energy estimate:
\begin{prop}\label{l:red-shift}
For $\delta>0$, define
$$
K_\delta=\{r_-+\delta<r<r_+-\delta\}\subset M.
$$
Then for $\delta$, $a$, and $\nu>0$ small enough, $s$ a nonnegative
integer, and every forward solution $u$ to the equation $\Box_g u=f\in
C_0^\infty(M_\delta)$, we have%
\footnote{We write $(A)\lesssim (B)$, if
there exists some constant $C$, independent of the choice of $f$, such
that $A\leq C(B)$. Here $C$ might depend on the parameters of the
problem such as $\nu$, $s$, and $\varkappa$.}
\begin{equation}\label{e:red-shift}
\|e^{\nu t_+} u\|_{H^{s+1}(M_\delta)}
\lesssim \|e^{\nu t_+} f\|_{H^s(M_\delta)}
+\|e^{\nu t} u\|_{H^{s+1}(K_\delta)}.
\end{equation}
\end{prop}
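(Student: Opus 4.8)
The plan is to prove \eqref{e:red-shift} by a weighted energy estimate built on the multiplier identity of Proposition~\ref{l:energy-divergence}, with a red-shift vector field supplying a coercive bulk term near the two horizons. First I would construct a vector field $X$ on $M_\delta$ that is timelike and future-directed everywhere, equals $\partial_t$ in the deep interior of $K_\delta$, and equals a red-shift multiplier on $M_\delta\setminus K_\delta$, i.e. on fixed neighborhoods of $\{r=r_\pm\}$. The defining property I demand there is the pointwise coercivity $-K^X(\nabla u,\nabla u)\geq c\,g_R(\nabla u,\nabla u)$, where $g_R$ is an auxiliary Riemannian metric on $M_\delta$; this is the red-shift effect and rests on the positivity of the surface gravities of the subextremal horizons. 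Shrinking $\delta$ so that these neighborhoods contain $\{|r-r_\pm|\leq\delta\}$, and taking $a$ small so that the ergospheres lie inside them (hence $\partial_t$ is timelike on $K_\delta$), I can keep the interpolation inside the future timelike cone by convexity of the latter.

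Next I would insert the weight. Since $\Div(e^{2\nu t_+}J_X(u))=e^{2\nu t_+}\big(\Div J_X(u)+2\nu\,T_{\nabla u}(X,\nabla t_+)\big)$, combining Proposition~\ref{l:energy-divergence} with \eqref{e:divergence-theorem} on the slabs $\{-T<t_+<\tau\}$ produces an identity whose boundary terms come from the bottom slice (where $u\equiv 0$ by support), the top slice $\{t_+=\tau\}$, and the caps $\{r=r_\pm\pm\delta\}$. The top slice is spacelike with $X$ future-directed and outward, so its flux is nonnegative by Proposition~\ref{l:current-positive-integral}; the caps are spacelike, and with the outward normal future-directed there the same proposition makes their fluxes nonnegative as well. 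Moving the bulk $-\int e^{2\nu t_+}K^X(\nabla u,\nabla u)$ to the boundary side, the left-hand side becomes a sum of these nonnegative fluxes plus the coercive bulk $c\int_{M_\delta\setminus K_\delta}e^{2\nu t_+}g_R(\nabla u,\nabla u)$, all bounded above by three contributions: the source $\int e^{2\nu t_+}(Xu)f$, handled by Cauchy--Schwarz and absorption against $\|e^{\nu t_+}f\|$; the weight error $2\nu\int e^{2\nu t_+}T_{\nabla u}(X,\nabla t_+)$, which is $\leq 2\nu C\int e^{2\nu t_+}g_R(\nabla u,\nabla u)$ and is absorbed once $\nu$ is small; and the bulk over $K_\delta$, where $K^X$ has no definite sign but is bounded by $\|e^{\nu t}u\|_{H^1(K_\delta)}^2$ since $e^{\nu t_+}\simeq e^{\nu t}$ on $K_\delta$. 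Letting $\tau\to\infty$ controls $\|e^{\nu t_+}\nabla u\|_{L^2(M_\delta)}$, and the zeroth-order term $\|e^{\nu t_+}u\|_{L^2}$ follows from a weighted one-dimensional Hardy inequality in $t_+$, using $\supp u\subset\{t_+>-T\}$. This gives \eqref{e:red-shift} for $s=0$.

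For $s\geq 1$ I would argue by induction, commuting the equation with a spanning frame of vector fields and applying the $s=0$ estimate to the derivatives of $u$. Because $\partial_{t_+}$ and $\partial_{\varphi_+}$ are Killing they commute with $\Box_g$ and cost nothing; the remaining transversal and angular commutators produce a top-order term whose sign is again governed by the red-shift and is absorbed into the coercive bulk, while the genuinely lower-order terms close against the inductive hypothesis together with $\|e^{\nu t_+}f\|_{H^s}$ and the $K_\delta$ norms. I expect this higher-order step to be the main obstacle: one must check that commuting $\Box_g$ with the transversal (red-shift) direction generates only favorable-sign terms at top order, so that coercivity survives to all orders, in the spirit of the higher-order red-shift of Dafermos and Rodnianski. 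For $s=0$ the crux is instead the existence of the red-shift multiplier with the stated coercivity, i.e. the explicit verification that the surface gravities are positive, together with the careful sign bookkeeping that renders every boundary flux at the caps and the top slice nonnegative.
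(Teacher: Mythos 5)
Your $s=0$ argument is essentially the paper's: the same multiplier $e^{2\nu t_+}J_X(u)$ with a red-shift field $X$ near the horizons, the same sign bookkeeping for the boundary fluxes via Proposition~\ref{l:current-positive-integral}, coercivity from the negative definiteness of $K^X$, absorption of the $2\nu\,T_{\nabla u}(X,\nabla t_+)$ term for $\nu$ small, and the indefinite bulk over $K_\delta$ dumped into the right-hand side. (The paper localizes $X$ to $M_\delta\setminus K_{2\delta}$ with a cutoff $\chi(r)$ rather than interpolating it with $\partial_t$, and uses Poincar\'e rather than a Hardy inequality for the zeroth-order term; these are cosmetic differences. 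Note also that the slices $\{t_+=\const\}$ are null, not spacelike, near the horizons, but Proposition~\ref{l:current-positive-integral} covers that case.)

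The genuine gap is the step you yourself flag as the main obstacle: you do not close the induction for $s\geq 1$, and the sketch you give of it is partly incorrect. Two issues. First, for $a\neq 0$ only $\partial_{t_+}$ and $\partial_{\varphi_+}$ are Killing; the remaining angular direction is not, and its commutator with $\Box_g$ is a genuine second-order operator containing $\partial_r^2$, which is top order and has nothing to do with the red-shift sign structure --- your claim that the angular commutators produce a top-order term ``whose sign is again governed by the red-shift'' is not right. The paper instead commutes with the full rotation algebra of $\mathbb S^2$, which is Killing at $a=0$, so these commutators carry $O(a)$ coefficients that are absorbed only at the very end by taking $a$ small. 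Second, and more importantly, the favorable sign of the transversal commutator must be exploited \emph{structurally}: one writes $[\Box_g,\partial_r]$ as $-\eta X\partial_r+L$ with $\eta>0$ near the horizons and $L$ free of $\partial_r^2$, so that $\partial_r u$ solves an equation for the \emph{modified} operator $\Box_g+\eta X$, not for $\Box_g$. To run the induction one therefore has to prove the estimate for the whole family $\Box_g+\psi X$ with $\psi\geq 0$ outside $K_\delta$ (this is Proposition~\ref{l:redshift-intermediate}); the extra term $-\psi(Xu)^2$ in the divergence identity has a good sign and costs nothing at $s=0$, and the family is stable under commutation with $\partial_r$ because $\psi+\eta\geq 0$. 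Without this strengthening of the statement the induction does not close: applying the $s=0$ estimate for $\Box_g$ alone to $\partial_r u$ leaves an uncontrolled top-order term $\eta X\partial_r u$ on the right-hand side. Your proposal names the right difficulty but supplies no mechanism to resolve it.
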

We start the proof with the construction of a special vector field;
see also~\cite[Proposition~3.3.1]{d-r}.
\begin{prop}\label{l:red-shift-X}
For $\delta>0$ and $a$ small enough, there exists a vector field $X$
defined on $M_\delta\setminus K_{2\delta}$, with the following
properties:
\begin{itemize}
\item $X$ is stationary; that is, $[X,\partial_t]=0$.
\item $X$ is timelike and $Xt_+>0$, $\pm Xr>0$ on $M_\delta\setminus K_{2\delta}$.
\item The tensor $K^X$, defined in Proposition~\ref{l:energy-divergence},
is negative definite on $M_\delta\setminus K_{2\delta}$.
\end{itemize}
\end{prop}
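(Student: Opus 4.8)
The plan is to write $X$ explicitly on a neighborhood of each horizon, treating first the Schwarzschild--de Sitter case $a=0$ and then passing to small $a>0$ by perturbation. Observe that $M_\delta\setminus K_{2\delta}$ is the disjoint union of two collars, $\{r_+-2\delta\le r\le r_++\delta\}$ and $\{r_--\delta\le r\le r_-+2\delta\}$, which shrink onto the horizons $\{r=r_\pm\}$ as $\delta\to0$; moreover all three requirements on $X$ (timelike, the sign conditions on $Xt_+$ and $Xr$, and negative definiteness of $K^X$) are \emph{open} conditions. Hence it suffices to produce $X$ with these properties on the horizons themselves: by continuity they then hold on a fixed neighborhood $\{|r-r_\pm|<\eta\}$, and we take $\delta<\eta/2$. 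I would look for $X$ of the form
\[
X=\gamma(r)\,\partial_{t_+}+\beta(r)\,\partial_r
\]
separately on each collar, where $\gamma,\beta$ are scalar functions of $r$ alone; such an $X$ is automatically stationary, and since $Xt_+=\gamma$ and $Xr=\beta$, the required sign conditions read $\gamma>0$ and $\pm\beta>0$.

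Next I would compute $g(X,X)$ and $K^X=\tfrac12\mathcal L_Xg-\tfrac14\Tr(g^{-1}\mathcal L_Xg)\,g$ on $\{r=r_\pm\}$ using the explicit form of the metric in the $(t_+,r,\theta,\varphi_+)$ coordinates, where $\Delta_r=0$ but the $dt_+\,dr$ cross term is nondegenerate. This reduces all three requirements to pointwise inequalities on $\gamma,\beta,\gamma',\beta'$. The timelike condition comes out as a sign condition on $\gamma\beta$ coming from the cross term (\emph{not} from $\partial_{t_+}$, which is spacelike inside the ergosphere): it is the transversal part $\beta\partial_r$ that makes $X$ timelike, so the ergosphere is no obstruction. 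The tensor $K^X$ on the horizon turns out to be block diagonal, with angular part proportional to $\beta'$ and a $2\times2$ block in the span of $\partial_{t_+},\partial_r$ whose entries involve $\beta\,\Delta_r'(r_\pm)$, $\gamma'$ and $\beta$. Negative definiteness thus forces $\beta'<0$ together with negativity of the diagonal entries and positivity of the determinant of the $2\times2$ block.

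The decisive point --- the red-shift --- is the sign of the entry proportional to $\beta\,\Delta_r'(r_\pm)$. Since $r_\pm$ are simple roots of $\Delta_r$ with $\Delta_r>0$ on $(r_-,r_+)$, we have $\Delta_r'(r_-)>0$ and $\Delta_r'(r_+)<0$; combined with the sign $\pm\beta>0$ already imposed, this yields $\beta\,\Delta_r'(r_\pm)<0$ at \emph{both} horizons. With this favorable sign in hand I choose $\beta$ with $\pm\beta>0$ and $\beta'<0$, then $\gamma>0$, and finally $\gamma'$ of the appropriate sign and large enough magnitude to make the determinant of the $2\times2$ block positive; this renders $K^X$ negative definite at $\{r=r_\pm\}$. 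Note that $\gamma'\not\equiv0$ is essential: for $X=\partial_{t_+}$ one has $K^X\equiv0$, and the same computation shows $K^X$ is degenerate (vanishing in the angular directions) for any constant-coefficient $X$, so the non-constant, transversal structure is genuinely needed.

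Finally, for $a>0$ the Kerr--de Sitter metric converges to the Schwarzschild--de Sitter metric in $C^\infty$ on the collars as $a\to0$, and $g(X,X)$ and $K^X$ depend continuously (together with the derivatives of $g$ entering them) on the metric. Since the $a=0$ field satisfies every condition with uniform room to spare on the compact collars, the same $X$ continues to work for $a$ sufficiently small. The main obstacle is exactly the negative definiteness of $K^X$: it fails for the natural Killing choice and for all constant-coefficient fields, and it hinges on pairing the transversal, non-constant structure of $X$ with the red-shift sign $\beta\,\Delta_r'(r_\pm)<0$; the remaining properties are comparatively routine, and the perturbation in $a$ is soft.
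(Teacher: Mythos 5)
Your proposal is correct and follows essentially the same route as the paper: take $X=\gamma(r)\partial_{t_+}+\beta(r)\partial_r$ for $a=0$, verify everything only at the horizons (the paper writes $X_t=1$, $X_r=\pm1$ there, exactly your sign condition $\pm\beta>0$, which makes $X$ timelike via the $dt_+\,dr$ cross term and gives the favorable red-shift sign $\beta\,\Delta_r'(r_\pm)<0$ in the $dt_+^2$ entry of $K^X$), then choose $\beta'<0$ for the angular block and $\gamma'$ of the right sign and large magnitude for the $2\times2$ block, and finally perturb in $a$ by continuity. The only slips are cosmetic: at the horizon with $a=0$ the field $\partial_{t_+}$ is null rather than spacelike, and it is $\beta'\neq0$ (not $\gamma'$) that cures the angular degeneracy of constant-coefficient fields.
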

\begin{proof}
We will construct $X$ for $a=0$; same field will work for small $a$ since
the components of the Kerr--de Sitter metric near the event horizons are
continuous functions of $a$. Moreover, since $\delta>0$ can be chosen arbitrarily
small, we only need to verify properties of $X$ at the event horizons.
We use the $(t_+,r,\theta,\varphi_+)$ coordinates.
The metric for $a=0$ has the form
$$
{\Delta_r\over r^2}\,dt_+^2\pm 2dt_+dr-r^2g_S
$$
for $|r-r_\pm|<2\delta$; if we take
$$
X=X_r(r)\partial_r+X_t(r) \partial_t,
$$
where $X_r,X_t$ are some functions, then at $r=r_\pm$,
$$
\begin{gathered}
\mathcal L_X g=
X_r\left(
{\Delta'_r\over r^2}dt_+^2-2rg_S\right)
\pm 2[\partial_rX_t dr^2+
\partial_r X_rdrdt_+],\\
K^X={X_r\Delta'_r\over 2r^2}dt_+^2\pm \partial_rX_t dr^2
\mp {2X_r\over r}drdt_++{r^2\over 2}\partial_rX_r g_S.
\end{gathered}
$$
We put $X_t=1$ and $X_r=\pm 1$ at $r=r_\pm$; then the field $X$ is
timelike and $dt_+(X)>0$.  To make $K^X$ negative definite, it then
suffices to take $\mp \partial_r X_t$ positive and large enough and
$\partial_r X_r$ negative at the event horizons.
\end{proof}
\noindent\textbf{Remark.} Note that the only component
of $K^X$ whose sign is definite independently of the choice of
$\partial_r X$ is
$$
K^X(\partial_t,\partial_t)={1\over 2}\mathcal (L_x g)_{t_+t_+}
=-g(X,\nabla_{\partial_t}\partial_t).
$$
One can compute
\begin{equation}\label{e:classical-red-shift}
\nabla_{\partial_t}\partial_t=\kappa \partial_t
\end{equation}
for some constant $\kappa>0$; then,
$$
K^X(\partial_t,\partial_t)=\mp\kappa X_r,\
r=r_\pm.
$$
The equation~\eqref{e:classical-red-shift} can be interpreted as
follows: the momentum is exponentially decaying as a function of the
geodesic parameter on the family of trapped geodesics $\{r=r_\pm,\
(\theta,\varphi)=\const\}$. This is related to the classical red-shift
effect; see~\cite[Sections~3.3.2 and~7.1]{d-r} for more details. 

We are now ready to prove Proposition~\ref{l:red-shift}. To facilitate
the inductive argument for estimating higher derivatives, we show
the following more general fact:
\begin{prop}\label{l:redshift-intermediate}
Assume that $\psi(r)$ is a function on $M_\delta$ such that $\psi\geq
0$ outside of $K_\delta$, and $u$ is a forward solution to the
equation
$$
(\Box_g +\psi X)u=f\in C_0^\infty(M_\delta).
$$
Here $X$ is the field constructed in Proposition~\ref{l:red-shift-X}.
Then for $a,\delta>0,\nu>0$ small enough and each nonnegative integer $s$,
\begin{equation}\label{e:intermediate-redshift}
\|e^{\nu t_+} u\|_{H^{s+1}(M_\delta)}
\lesssim \|e^{\nu t_+} f\|_{H^s(M_\delta)}
+\|e^{\nu t_+} u\|_{H^{s+1}(K_\delta)}.
\end{equation}
\end{prop}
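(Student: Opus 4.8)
The plan is to prove the proposition by induction on $s$; the point of allowing an arbitrary potential $\psi\ge 0$ is precisely that differentiating the equation in the radial direction produces a new equation of the same form, with a modified but still admissible potential. The case $s=0$ is a weighted energy estimate in which the red-shift multiplier $X$ of Proposition~\ref{l:red-shift-X} is fed into the divergence identity~\eqref{e:divergence-theorem}; the inductive step commutes the equation with the Killing fields $\partial_{t_+},\partial_{\varphi_+}$ and with $X$, and recovers the remaining derivatives by ellipticity on the spheres. Throughout, $X$ is only available on $M_\delta\setminus K_{2\delta}$, so I would insert a radial cutoff $\chi(r)$ equal to $1$ outside $K_{3\delta/2}$ and supported away from $K_{2\delta}$, which confines every error of ``wrong'' sign to the collar $K_{3\delta/2}\setminus K_{2\delta}\subset K_\delta$.

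For the base case I would apply the divergence theorem to the weighted current $\chi e^{2\nu t_+}J_X(u)$ on $M_\delta\cap\{t_+<\tau\}$. Since $X$ is stationary and $\partial_{t_+}t_+=1$, Proposition~\ref{l:energy-divergence} together with $\Box_g u=f-\psi Xu$ gives, on $\{\chi=1\}$,
\begin{equation*}
\Div\big(e^{2\nu t_+}J_X(u)\big)=e^{2\nu t_+}\Big[(Xu)f-\psi(Xu)^2+K^X(\nabla u,\nabla u)+2\nu\,T_{\nabla u}(X,\nabla t_+)\Big].
\end{equation*}
Here every term cooperates: the red-shift term $K^X(\nabla u,\nabla u)\le -c|\nabla u|^2$ is coercive in all first derivatives, $-\psi(Xu)^2\le 0$ since $\psi\ge 0$ there, the cross term $(Xu)f$ is absorbed by Cauchy--Schwarz into the coercive term and the right-hand side, and the weight term is $O(\nu)|\nabla u|^2$ and is absorbed into $K^X$ once $\nu$ is small (this is where smallness of $\nu$ is used). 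The boundary flux through the spacelike faces $\{r=r_\pm\pm\delta\}$ and through $\{t_+=\tau\}$ is nonnegative by Proposition~\ref{l:current-positive-integral} --- because $X$ is timelike, points to the future ($Xt_+>0$), and by $\pm Xr>0$ points outward at the outer faces --- so it may simply be discarded; the term from differentiating $\chi$ lives in the collar and is bounded by $\|e^{\nu t_+}u\|_{H^1(K_\delta)}$. This controls $\int e^{2\nu t_+}|\nabla u|^2$ on $M_\delta\setminus K_{2\delta}$; the missing zeroth-order part is recovered from the (now controlled) $\partial_{t_+}$-component by a weighted Hardy inequality in $t_+$, valid because $\supp u\subset\{t_+>-T\}$ and finiteness of the weighted norm forces decay as $t_+\to\infty$. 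Adding the trivial contribution $\|e^{\nu t_+}u\|_{H^1(K_{2\delta})}\le\|e^{\nu t_+}u\|_{H^1(K_\delta)}$ completes $s=0$.

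For the inductive step I would bound $\|e^{\nu t_+}u\|_{H^{s+2}}$ by $\|e^{\nu t_+}u\|_{H^1}$ (from the base case) together with $\|e^{\nu t_+}Vu\|_{H^{s+1}}$ for $V\in\{\partial_{t_+},\partial_{\varphi_+},X\}$. The fields $\partial_{t_+}$ and $\partial_{\varphi_+}$ are Killing and commute with $\psi X$ (as $\psi=\psi(r)$ and $X$ is stationary), so $\partial_{t_+}u$ and $\partial_{\varphi_+}u$ solve the same equation with right-hand sides $\partial_{t_+}f,\partial_{\varphi_+}f$, and the inductive hypothesis applies verbatim. Commuting with $X$ gives $(\Box_g+\psi_1 X)(Xu)=Xf+E$, where $E$ collects commutator terms that are either lower order in $u$ or involve the already-estimated Killing derivatives, and --- crucially --- the new potential satisfies $\psi_1\ge 0$ outside $K_\delta$. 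The inductive hypothesis applied with $\psi_1$ then controls $\|e^{\nu t_+}Xu\|_{H^{s+1}}$, the error $E$ being absorbed by reapplying the hypothesis to $u$ itself. Finally, the angular derivatives not directly seen by $\{\partial_{t_+},\partial_{\varphi_+},X\}$ are recovered from the equation, using that the angular part of $\Box_g$ is elliptic on the spheres $\{t_+,r=\const\}$.

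The main obstacle is verifying that commutation with $X$ keeps the potential nonnegative outside $K_\delta$, i.e.\ that $\psi_1\ge 0$. This is the higher-order red-shift phenomenon: the top-order commutator term $\propto X^2u$ comes with a coefficient governed by the surface gravity $\kappa>0$ of~\eqref{e:classical-red-shift}, and one must check that this favorable contribution dominates the remaining commutator terms near the event horizons, for $a$ and $\delta$ small. A secondary technical point, already present in the base case, is justifying the sign and vanishing of the temporal boundary fluxes in view of the null character of $\{t_+=\const\}$ at the horizons and of the exponential weight.
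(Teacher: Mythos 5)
Your base case is essentially the paper's argument: the same current $e^{2\nu t_+}\chi J_X(u)$, the same sign bookkeeping via Propositions~\ref{l:current-positive-integral} and~\ref{l:energy-divergence}, absorption of the $O(\nu)$ weight term into the negative definite $K^X$, and a Poincar\'e/Hardy step for the zeroth-order part. That part is fine.

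The inductive step, however, has a genuine gap in the order of the argument. You commute only with the exact Killing fields $\partial_{t_+},\partial_{\varphi_+}$ and with $X$, and plan to recover the $\theta$-derivatives \emph{afterwards} by ellipticity on the spheres. But the commutator $[\Box_g,X]$ (equivalently $[\Box_g,\partial_r]$, since $X=X_r\partial_r+X_t\partial_t$ and $\partial_t$ is Killing) contains the term $X_r\,\partial_r(g^{\theta\theta})\,\partial_\theta^2$, whose coefficient is of order $1$ at the event horizons (for $a=0$ it is a multiple of $\partial_r(r^{-2})$). So the error $E$ in the equation $(\Box_g+\psi_1X)(Xu)=Xf+E$ is \emph{not} ``lower order in $u$ or an already-estimated Killing derivative'': it contains $\partial_\theta^2u$ at the top order, i.e.\ a piece of the very norm $\|u\|_{H^{s+2}}$ you are trying to bound, and ``reapplying the hypothesis to $u$ itself'' only yields $\|u\|_{H^{s+1}}$. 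Meanwhile your elliptic recovery of $\partial_\theta^2u$ from the equation requires the mixed $r$-derivatives that the $X$-commutation was supposed to produce, so the two steps feed on each other. The paper breaks this circle by commuting first with $\partial_t$ \emph{and all three rotation Killing fields of $\mathbb S^2$} (exactly Killing for $a=0$, so that $[\Box_g,Y]$ is a second-order operator with $O(a)$ coefficients absorbed at the very end by taking $a$ small); this controls every derivative $\hat\partial u$ in $t_+,\theta,\varphi_+$ \emph{before} the radial commutation, after which the commutator error $L$ — chosen to contain no $\partial_r^2$ — satisfies $\|Lu\|_{H^{s-1}}\lesssim\|\hat\partial u\|_{H^s}+\|u\|_{H^s}$ and is genuinely controlled. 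Your route could probably be repaired (e.g.\ using that $g^{rr}=-\Delta_r/\rho^2=O(\delta)$ near the horizons to absorb the $\partial_r^2$ contribution when solving the equation for $\partial_\theta^2u$, and running the two estimates simultaneously), but as written the step fails. Your identification of the key sign condition $\psi_1\geq 0$ as the higher-order red-shift is correct and matches the paper's claim that $[\Box_g+\psi X,\partial_r]=-\eta X\partial_r+L$ with $\eta>0$ near the horizons, though like the paper you leave the sign computation to the reader.
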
 
\begin{proof} 
We use induction on $s$. First, assume that $s=0$. Take a nonnegative
function $\chi(r)$ on $M_\delta$ such that $\chi=0$ near
$K_{2\delta}$, but $\chi=1$ away from $K_\delta$. Let $T>0$ and apply
the divergence theorem in the region
$$
\Omega_T=M_\delta\cap \{t_+<T\}
$$
to the vector field
$$
V=e^{2\nu t_+}\chi J_X(u).
$$
Here $J_X$ is defined in Section~1.1.  (The divergence theorem holds,
despite $\Omega_T$ being noncompact, since $u$ is a forward solution.) 
We compute by Proposition~\ref{l:energy-divergence}
$$
\begin{gathered}
\Div V=e^{2\nu t_+}[2\nu \chi dt_+(J_X(u))+e^{2\nu t_+}\chi' dr(J_X(u))\\
+\chi(Xu)f-\chi\psi (Xu)^2+\chi K^X(\nabla u,\nabla u)].
\end{gathered}
$$
The flux of $V$ is nonnegative by
Proposition~\ref{l:current-positive-integral}; therefore, by letting
$T\to +\infty$ we get
$$
\begin{gathered}
\int_{M_\delta\setminus K_\delta} -e^{2\nu t_+}K^X(\nabla u,\nabla u)\,d\Vol
\lesssim \nu \|e^{\nu t_+}u\|_{H^1(M_\delta)}^2\\
+\|e^{\nu t_+}u\|_{H^1(K_\delta)}^2
+\|e^{\nu t_+}u\|_{H^1(M_\delta)}\cdot \|f\|_{L^2(M_\delta)}
\end{gathered}
$$
Since $K^X$ is negative definite on $M_\delta\setminus K_\delta$
and by Poincar\'e inequality, we have for $\nu$ small enough,
$$
\begin{gathered}
\|e^{\nu t_+}u\|_{H^1(M_\delta)}^2
\lesssim
\|e^{\nu t_+}u\|_{H^1(K_\delta)}^2
+\|e^{\nu t_+}u\|_{H^1(M_\delta)}\cdot \|f\|_{L^2(M_\delta)}
\end{gathered}
$$
This finishes the proof of~\eqref{e:intermediate-redshift} for $s=0$.

Now, assume that $s\geq 1$ and~\eqref{e:intermediate-redshift} is true
for $s-1$; we will prove it for $s$ following~\cite[Sections~1.7.5 and~10]{d-r08}
(see also~\cite[Theorem~4.4]{t-t}).
First, let $Y$ be equal to either $\partial_t$ or a Killing field on
$\mathbb S^2$; then $[\psi X,Y]=0$ and, since the metric is
spherically symmetric for $a=0$, $[\Box_g,Y]$ is a second order
differential operator with $O(a)$ coefficients.  We have
$$
(\Box_g+\psi X)Yu=Yf+[\Box_g,Y]u;
$$
therefore, by~\eqref{e:intermediate-redshift},
$$
\begin{gathered}
\|e^{\nu t_+}Yu\|_{H^s(M_\delta)}\lesssim
\|e^{\nu t_+}Yf\|_{H^{s-1}(M_\delta)}
+O(a)\|e^{\nu t_+}u\|_{H^{s+1}(M_\delta)}
+\|e^{\nu t_+}Yu\|_{H^s(K_\delta)}.
\end{gathered}
$$
Therefore, if $\hat \partial u$ is composed of derivatives of $u$ with
respect to $t_+,\theta,\varphi_+$, then
\begin{equation}\label{e:red-shift-i1}
\|e^{\nu t_+}\hat \partial u\|_{H^s(M_\delta)}\lesssim
\|e^{\nu t_+}f\|_{H^s(M_\delta)}
+O(a)\|e^{\nu t_+}u\|_{H^{s+1}(M_\delta)}
+\|e^{\nu t_+}u\|_{H^{s+1}(K_\delta)}.
\end{equation}
Now, we estimate $\partial_r u$. We can write
$$
[\Box_g+\psi X,\partial_r]=-\eta X \partial_r+L,
$$
where $L$ is a second order differential operator not containing any
$\partial_r^2$ terms and $\eta$ is positive near the event
horizons. Then
$$
\|e^{\nu t_+}Lu\|_{H^{s-1}(M_\delta)}\lesssim
\|e^{\nu t_+} \hat \partial u\|_{H^s(M_\delta)}+
\|e^{\nu t_+} u\|_{H^s(M_\delta)}.
$$
We have
$$
(\Box_g+(\psi+\eta)X)\partial_ru=\partial_r f+Lu;
$$
since $\psi+\eta\geq 0$ near the event horizons,
by~\eqref{e:intermediate-redshift} applied to $\partial_r u$
and~\eqref{e:red-shift-i1} we get
$$
\begin{gathered}
\|e^{\nu t_+}u\|_{H^{s+1}(M_\delta)}\lesssim
\|e^{\nu t_+}\partial_r u\|_{H^s(M_\delta)}
+\|e^{\nu t_+}\hat \partial u\|_{H^s(M_\delta)}\\\lesssim
\|e^{\nu t_+}f\|_{H^s(M_\delta)}
+\|e^{\nu t_+}u\|_{H^{s+1}(K_\delta)}
+\|e^{\nu t_+}\hat \partial u\|_{H^s(M_\delta)}
+\|e^{\nu t_+}u\|_{H^s(M_\delta)}\\\lesssim
\|e^{\nu t_+}f\|_{H^s(M_\delta)}
+\|e^{\nu t_+}u\|_{H^{s+1}(K_\delta)}
+O(a)\|e^{\nu t_+}u\|_{H^{s+1}(M_\delta)};
\end{gathered}
$$
it remains to take $a$ small enough.
\end{proof}

\section{Proof of exponential decay}

Throughout this section, $u$ is a forward solution to the equation
$\Box_g u=f$, with $f\in C_0^\infty(M_\delta)$. (The estimates for general
$f$ can then be obtained by a density argument.) 

\subsection{Case of $f$ supported in $K_\delta$}
First of all, we use
the resolvent estimates of~\cite{skds} to obtain exponential decay
away from the event horizons:
\begin{prop}\label{l:estimate-k-k}
Fix $\delta>0,\varkappa>0$ and assume that $\chi(r)\in
C_0^\infty(r_-+\delta,r_+-\delta)$.  Then for $a$ small enough and
every $s\geq 0$, we have
$$
\|e^{\nu t}\chi(r) (u-\Pi_0f)\|_{H^s}\lesssim
\|e^{\nu t} f\|_{H^{s+\varkappa}}
$$
for every $f\in C_0^\infty(K_\delta)$. (We can use $e^{\nu t}$ in place
of $e^{\nu t_+}$, as $|t-t_+|$ is bounded and the two weights are equivalent
in $K_\delta$.) 
\end{prop}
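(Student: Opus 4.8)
The plan is to pass to the frequency domain via the Fourier transform in $t$ and reduce the estimate to the cutoff resolvent bounds of \cite{skds}, following the scheme of \cite[Theorem~6]{skds}. Since $\partial_t=\partial_{t_+}$ is Killing, $\Box_g$ is stationary, and conjugating it by the Fourier transform $\hat u(\omega)=\int e^{i\omega t}u\,dt$ produces a holomorphic family of stationary operators $P(\omega)$ on the spatial slice $(r_-,r_+)\times\mathbb S^2$. Because $u$ is a forward solution and $f$ has compact support in $t$, the transform $\hat f(\omega)$ is entire while $\hat u(\omega)$ is well-defined, holomorphic, and $L^2$-valued for $\Imag\omega$ large, where $P(\omega)\hat u(\omega)=\hat f(\omega)$. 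Hence $\hat u(\omega)=R(\omega)\hat f(\omega)$, with $R(\omega)=P(\omega)^{-1}$ the scattering resolvent of \cite{skds}.

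First I would write the inversion formula $u(t)=\frac{1}{2\pi}\int_{\Imag\omega=C}e^{-i\omega t}R(\omega)\hat f(\omega)\,d\omega$ on a high horizontal contour $\Imag\omega=C\gg0$, multiply by $e^{\nu t}\chi(r)$, and deform the contour down to $\{\Imag\omega=-\nu\}$. The key input is the meromorphic continuation of $\chi R(\omega)$ to $\{\Imag\omega>-\nu\}$ together with the polynomial bound on $\chi R(\omega)\chi$ from \cite[Proposition~3.4]{skds}, valid for $a,\nu$ small; these guarantee that the only singularity crossed is the simple pole at the zero resonance $\omega=0$ and that the deformation is justified. Writing $\omega=\sigma-i\nu$ on the shifted contour, the factor $e^{-i(\sigma-i\nu)t}e^{\nu t}=e^{-i\sigma t}$ becomes purely oscillatory, so the shifted integral is the inverse Fourier transform in $\sigma$ of $\chi R(\sigma-i\nu)\widehat{e^{\nu t}f}(\sigma)$, using $\hat f(\sigma-i\nu)=\widehat{e^{\nu t}f}(\sigma)$.

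Next I would identify the residue at $\omega=0$ as the asserted constant $\Pi_0f$. Since $\Box_g1=0$, the constants lie in the kernel of $P(0)$, so $R(\omega)$ has a simple pole at $\omega=0$ whose residue is a rank-one operator mapping into the constants. Evaluating this residue on $\hat f(0)=\int f\,dt$ and tracking the normalization through the structure of $P(\omega)$ near the zero resonance yields the constant $\Pi_0f$, while $e^{-i\omega t}|_{\omega=0}\,e^{\nu t}=e^{\nu t}$ attaches the correct weight; pinning down the precise coefficient $\frac{1+\alpha}{4\pi(r_+^2+r_-^2+2a^2)}$ is the one genuinely computational point here. Consequently $e^{\nu t}\chi(u-\Pi_0f)$ equals the remaining integral over $\{\Imag\omega=-\nu\}$.

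The estimate then follows from Plancherel in $t$: the $H^s$ norm of the shifted integral is controlled by $\int_{\mathbb R}\langle\sigma\rangle^{2s}\|\chi R(\sigma-i\nu)\widehat{e^{\nu t}f}(\sigma)\|_{H^s_x}^2\,d\sigma$, into which I feed the resolvent bound of \cite{skds}; the polynomial growth of $\chi R(\sigma-i\nu)\chi$ in $\sigma$ is absorbed by the extra $\langle\sigma\rangle^{\varkappa}$ available in $\|e^{\nu t}f\|_{H^{s+\varkappa}}$, which is where the loss of $\varkappa$ derivatives enters. The hard part will be the contour deformation: one must know that no resonances other than $\omega=0$ lie in the strip $\{-\nu\le\Imag\omega\le0\}$ and that the resolvent obeys uniform polynomial bounds as $|\Real\omega|\to\infty$ there, so that the pieces at infinity vanish and the shifted integral converges. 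Both facts are the content of the construction in \cite{skds} for $a,\nu$ small, and verifying that the present cutoff geometry $\supp f,\supp\chi\subset K_\delta$ falls within its scope is the crux of adapting \cite[Theorem~6]{skds}.
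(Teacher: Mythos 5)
Your overall scheme --- Fourier--Laplace transform in $t$, contour deformation down to $\{\Imag\omega=-\nu\}$ past the simple pole at $\omega=0$ whose residue produces $\Pi_0 f$, then Plancherel on the shifted contour --- is exactly the paper's, and those parts are fine. The genuine gap is in the final step. The resolvent estimate you propose to ``feed in'' is a bound on the cutoff resolvent $\chi R_g(\omega)\chi$ acting on $L^2$ of the space slice, polynomially growing in $\omega$; it gives no control of $\|\chi R_g(\omega)v\|_{H^s_x}$ for $s>0$, which is what your Plancherel expression $\int\langle\sigma\rangle^{2s}\|\chi R(\sigma-i\nu)\widehat{e^{\nu t}f}(\sigma)\|_{H^s_x}^2\,d\sigma$ demands. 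A naive elliptic bootstrap from $P_g(\omega)R_g(\omega)v=v$ does not rescue this, because $P_g(\omega)$ contains terms of size $\omega^2$, so each gain of two spatial derivatives costs two powers of $\omega$ unless the derivative count is weighted against the frequency. As written, your argument proves only the case $s=0$.

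The paper closes this gap with a semiclassical elliptic argument that you would need to supply. Set $h=\langle\omega\rangle^{-1}$ and work in semiclassical Sobolev spaces $H^s_h$ on the space slice $K_S$, chosen so that $\|e^{\nu t}f\|_{H^{s}}$ is equivalent to the $L^2_\nu$-integral of $\langle\omega\rangle^{s}\|\hat f(\omega)\|_{H^{s}_h}$ over the shifted contour. The operator $h^2P_g(\omega)$ is semiclassically elliptic on $K_S$ outside an $\omega$-independent compact set precisely because $K_\delta$ does not meet the ergosphere for $a$ small --- this is where the hypothesis $\supp\chi,\supp f\subset K_\delta$ really enters. One then builds a parametrix $Q$ with $I-Qh^2P_g(\omega)$ bounded $H^{-N}_{h,\loc}\to H^{N}_{h,\loc}$ uniformly in $\omega$, and applying it to $R_g(\omega)v$ gives $\|\chi R_g(\omega)v\|_{H^s_h}\lesssim\langle\omega\rangle^{-2}\|v\|_{H^{s-2}_h}+\|\chi_1 R_g(\omega)v\|_{H^0_h}$ for a slightly larger cutoff $\chi_1$. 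This converts the $L^2\to L^2$ polynomial bound into the needed $H^s_h$ bound with no further loss in $\omega$, after which your Plancherel step goes through with the $\varkappa$-derivative loss absorbing the polynomial growth.
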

\begin{proof}
We use the argument of~\cite[Theorem~6]{skds}.
By~\cite[Proposition~1.1]{skds}, $e^{-Ct}u$ is tempered in the time
variable for some constant $C$; therefore, the Fourier--Laplace
transform
$$
\hat u(\omega,\cdot)=\int_{-\infty}^\infty e^{it\omega} u(t,\cdot)\,dt
$$
is well-defined and holomorphic in $\{\Imag\omega>C\}$. Let
$K_S=(r_-+\delta,r_+-\delta)\times \mathbb S^2$ be the space slice of
$K_\delta$.  We choose $a$ small enough so that~\cite[Theorem~2]{skds}
provides us with the scattering resolvent $R_g(\omega):L^2(K_S)\to
L^2(K_S)$; it is a family of operators meromorphic in the entire
complex plane.  By~\cite[Proposition~1.2]{skds},
$$
\chi(r)\hat u(\omega)=\chi(r)R_g(\omega)(\rho^2\hat f(\omega)),
$$
where $\rho(r,\theta)$ is the smooth function defined in Section~1.2
and $\hat f(\omega)$ is an entire function that is rapidly decaying in
$\omega$ for $\Imag\omega$ bounded, with values in
$C_0^\infty(K_S)$. Now, there exists $\nu>0$ such that $R_g(\omega)$
is holomorphic and polynomially bounded in $\{\Imag\omega\geq -\nu\}$,
except for a pole at zero~\cite[Theorems~4 and~5]{skds}. Therefore, we
can use Fourier inversion formula and contour deformation to get
\begin{equation}\label{e:fourier-inversion}
\chi(r)(u(t,\cdot)-\Pi_0f)
={1\over 2\pi}\int_{\Imag\omega=-\nu}
e^{-it\omega}\chi(r)R_g(\omega)(\rho^2\hat f(\omega))\,d\omega,
\end{equation}
the residue at zero being exactly $\Pi_0 f$.  Now, let $s\in \mathbb
R$, put $h=\langle\omega\rangle^{-1}$, and introduce the semiclassical
Sobolev space $H^s_{h,\comp}(K_S)\subset \mathcal E'(K_S)$; the norm
of of $v\in \mathcal E'(K_S)$ in this space is given by $\|\langle
hD\rangle^sv\|_{L^2}$, where $\langle hD\rangle ^s$ is a Fourier
multiplier and $v$ is extended by zero to $\mathbb R^3\supset K_S$.
Then the norm of $e^{\nu t}f$ in $H^s(K_\delta)$ is equivalent to the
norm of $\langle\omega\rangle^s\hat f$ in $L^2_\nu H^s_h$, where
$$
\|v\|_{L^2_\nu H^s_h}^2
=\int_{\Imag\omega=-\nu} \|v(\omega)\|_{H^s_h}^2\,d\omega.
$$
This, together with the resolvent estimate of~\cite[Theorem~5]{skds} gives
for $\varkappa$ fixed and $\nu$ small enough,
$$
\|\langle\omega\rangle^s\chi(r)R_g(\omega)(\rho^2\hat f(\omega))\|_{L^2_\nu H^0_h}
\lesssim \|e^{\nu t}f\|_{H^{s+\varkappa}}.
$$
Now, we use that $R_g(\omega)$ is a right inverse to the second order
differential operator~\cite[Section~1]{skds}
$$
\begin{gathered}
P_g(\omega)=D_r(\Delta_r D_r)-{(1+\alpha)^2\over\Delta_r}((r^2+a^2)\omega-aD_\varphi)^2\\
+{1\over\sin\theta}D_\theta(\Delta_\theta\sin\theta D_\theta)
+{(1+\alpha)^2\over\Delta_\theta\sin^2\theta}(a\omega\sin^2\theta-D_\varphi)^2;
\end{gathered} 
$$
then $h^2
P_g(\omega)$ is a semiclassical pseudodifferential operator and for
$a$ small enough, it is elliptic on $K_S$ outside of some $\omega$-independent
compact set. (This is equivalent to saying that $K_\delta$ does not
intersect the ergosphere.) We can construct a semiclassical parametrix
(see for example~\cite[Section~4.5]{e-z}
or~\cite[Proposition~5.1]{skds}); i.e., a properly supported
semiclassical pseudodifferential operator on $Q$ on $K_S$ that maps
$H^s_{h,\loc}(K_S)\to H^{s+2}_{h,\loc}(K_S)$ for all $s$ and such that
$I-Qh^2P_g(\omega)$ maps $H^{-N}_{h,\loc}(K_S)\to H^{N}_{h,\loc}(K_S)$
with norm $O(1)$ for all $N$.  Then for any $\chi_1(r)\in
C_0^\infty(r_-,r_+)$ that is nonzero near $\supp\chi$ and any $v\in
C_0^\infty(K_S)$, we can apply $I-Qh^2P_g(\omega)$ to $R_g(\omega)v$
to get
\begin{equation}\label{e:semiclassical-elliptic}
\|\chi(r) R_g(\omega) v\|_{H^s_h}\lesssim
\langle\omega\rangle^{-2}\|v\|_{H^{s-2}_h}+\|\chi_1(r)R_g(\omega)v\|_{H^0_h}.
\end{equation}
Therefore,
$$
\|\langle\omega\rangle^s\chi(r)R_g(\omega)(\rho^2\hat f(\omega))\|_{L^2_\nu H^s_h}
\lesssim \|e^{\nu t}f\|_{H^{s+\varkappa}};
$$
it remains to combine this with~\eqref{e:fourier-inversion}.
\end{proof}
Combining the above fact with the red-shift estimate, we get
\begin{prop}\label{l:estimate-k-m}
Fix $\delta>0$ such that Proposition~\ref{l:red-shift} holds and
choose $a$ small enough so that Proposition~\ref{l:estimate-k-k} holds
for $\delta/2$ in place of $\delta$. Take $\nu>0$ small enough so that
both propositions above hold.  Then for $s\geq 1$ and every
$\varkappa>0$, we have
$$
\|e^{\nu t_+}(u-\Pi_0 f)\|_{H^s(M_\delta)}\lesssim
\|e^{\nu t}f\|_{H^{s+\varkappa}(K_\delta)},
$$
for every $f\in C_0^\infty(K_\delta)$.
\end{prop}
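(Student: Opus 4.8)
The plan is to set $v=u-\Pi_0 f$ and combine the interior exponential decay of Proposition~\ref{l:estimate-k-k} with the red-shift propagation estimate of Proposition~\ref{l:red-shift}. Since $\Pi_0 f$ is a constant and $\Box_g$ annihilates constants, $v$ solves $\Box_g v=f$ with the same right-hand side, and moreover $\nabla v=\nabla u$. The subtraction of this constant is forced on us: $u$ tends to $\Pi_0 f$ as $t_+\to+\infty$, so $\|e^{\nu t_+}u\|$ is infinite, while $\|e^{\nu t_+}v\|$ is precisely the finite quantity to be bounded.

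First I would choose a cutoff $\chi(r)\in C_0^\infty(r_-+\delta/2,r_+-\delta/2)$ with $\chi\equiv 1$ on $K_\delta$. Because $f\in C_0^\infty(K_\delta)\subset C_0^\infty(K_{\delta/2})$, Proposition~\ref{l:estimate-k-k} (applied with $\delta/2$ in place of $\delta$) yields
$$
\|e^{\nu t}v\|_{H^s(K_\delta)}\leq \|e^{\nu t}\chi(r)v\|_{H^s}
\lesssim \|e^{\nu t}f\|_{H^{s+\varkappa}(K_\delta)},
$$
which controls $v$ on the interior region $K_\delta$, away from both ergospheres and event horizons.

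Next I would propagate this bound to all of $M_\delta$ using Proposition~\ref{l:red-shift} applied to $v$, with $s-1$ in place of $s$; this is legitimate precisely because $s\geq 1$ makes $s-1$ a nonnegative integer, which explains the hypothesis. The result is
$$
\|e^{\nu t_+}v\|_{H^s(M_\delta)}\lesssim
\|e^{\nu t_+}f\|_{H^{s-1}(M_\delta)}+\|e^{\nu t}v\|_{H^s(K_\delta)}.
$$
For the first term, $\supp f\subset K_\delta$ together with the equivalence of $e^{\nu t_+}$ and $e^{\nu t}$ there gives $\|e^{\nu t_+}f\|_{H^{s-1}(M_\delta)}\lesssim \|e^{\nu t}f\|_{H^{s-1}(K_\delta)}\leq \|e^{\nu t}f\|_{H^{s+\varkappa}(K_\delta)}$; the second term is exactly the previous step. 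Combining the two bounds closes the argument.

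The one point needing care, and the main obstacle, is that $v$ is \emph{not} a forward solution in the sense of Proposition~\ref{l:red-shift}: it equals the nonzero constant $-\Pi_0 f$ for $t_+<-T$. To justify applying the proposition to $v$ I would return to its proof, which rests on the divergence theorem applied to $e^{2\nu t_+}\chi J_X(\cdot)$. Since the current $J_X$ depends only on $\nabla v=\nabla u$, which is supported in $\{t_+>-T\}$, the flux through the past boundary still vanishes and the divergence-theorem computation is unchanged. The only other place the forward-solution hypothesis enters is the weighted Poincar\'e inequality, and this continues to hold for $v$ because $e^{\nu t_+}v\to 0$ both as $t_+\to-\infty$ (the weight kills the constant) and as $t_+\to+\infty$ (by the interior decay just established). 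Hence Proposition~\ref{l:red-shift} applies to $v$ verbatim, and the estimate follows.
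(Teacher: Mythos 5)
Your overall strategy coincides with the paper's: control $u-\Pi_0 f$ on $K_\delta$ via Proposition~\ref{l:estimate-k-k} applied with $\delta/2$ in place of $\delta$, then propagate to all of $M_\delta$ with the red-shift estimate. The genuine difference is how you handle the fact that $v=u-\Pi_0 f$ is not a forward solution. The paper keeps Proposition~\ref{l:red-shift} as a black box by applying it to $u-\psi(t_++T)\Pi_0 f$, where $\psi$ is a smooth step function in $t_+$: this \emph{is} a forward solution, the extra source $\Pi_0 f\,\Box_g\psi(t_++T)$ and the discrepancy $(1-\psi(t_++T))\Pi_0 f$ contribute only $O(e^{-\nu T}|\Pi_0 f|)$ to the weighted norms, and letting $T\to+\infty$ removes them. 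You instead reopen the proof of the red-shift estimate and argue it applies to $v$ directly; much of that is sound, since $J_X(v)=J_X(u)$ and every commutator step in the induction involves only derivatives of $v$, which agree with those of $u$.

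The soft spot is your justification of the Poincar\'e step. You assert $e^{\nu t_+}v\to 0$ as $t_+\to+\infty$ ``by the interior decay just established,'' but Proposition~\ref{l:estimate-k-k} only gives decay of $v$ on $K_\delta$, whereas the Poincar\'e inequality is needed on $M_\delta\setminus K_\delta$ --- exactly the region where decay of $v$ is what you are trying to prove, so the argument as stated is circular. (Note also that a temporal Poincar\'e inequality with the weight $e^{\nu t_+}$, $\nu>0$, requires vanishing at $+\infty$; the decay of the weight as $t_+\to-\infty$, which is what you invoke to ``kill the constant,'' does not produce a usable inequality in that direction.) The gap is removable --- for instance, use a spatial Poincar\'e inequality across the collars $M_\delta\setminus K_\delta$ starting from the trace on $\partial K_\delta$, which is controlled by the $H^{s+1}(K_\delta)$ term already on the right-hand side --- but the paper's cutoff-in-time trick avoids the issue entirely and is the cleaner route. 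Two smaller points: Proposition~\ref{l:red-shift} is stated only for integer $s$, so for non-integer $s\geq 1$ you still need the interpolation in Sobolev spaces that the paper invokes; and $s\geq 1$ does not by itself make $s-1$ a nonnegative integer, so your stated reason for the hypothesis is off.
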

\begin{proof}
We consider the case of integer $s$; the general case follows
by interpolation in Sobolev spaces (see for example~\cite[Section~4.2]{tay}).
Let $\psi(t_+)$ be a smooth function that is equal to 1 for $t_+$
large positive and to 0 for $t_+$ large negative; take large $T\in
\mathbb R$ and apply Proposition~\ref{l:red-shift} to
$u-\psi(t_++T)\Pi_0f$:
$$
\begin{gathered}
\|e^{\nu t_+}(u-\Pi_0f)\|_{H^s(M_\delta)}
\\\lesssim \|e^{\nu t_+}(u-\psi(t_++T)\Pi_0 f)\|_{H^s(M_\delta)}
+\|e^{\nu t_+}(1-\psi(t_++T))\Pi_0 f\|_{H^s(M_\delta)}\\\lesssim
\|e^{\nu t_+}f\|_{H^{s-1}(M_\delta)}
+\|e^{\nu t_+}(u-\Pi_0f)\|_{H^s(K_\delta)}
+e^{-\nu T}|\Pi_0 f|;
\end{gathered}
$$
the second term above is estimated by Proposition~\ref{l:estimate-k-k}
and the third one tends to zero at $T\to +\infty$.
\end{proof}

\subsection{General case}
The idea is to construct an exponentially decaying function $u_1$
solving the equation $\Box_g u_1=f$ near the event horizons and then
estimate the difference $u-u_1$ by Proposition~\ref{l:estimate-k-m}.
We let $u_1\in C^\infty(M_\delta\setminus K_{2\delta})$
solve the following initial/boundary value problem:
$$
\begin{gathered}
\Box_g u_1=f\text{ in }M_\delta\setminus K_{2\delta},\\
\supp u_1\subset\{t_+>-T\}\text{ for some }T,\\
u_1|_{\partial K_{2\delta}}=0.
\end{gathered}
$$
Note that the surfaces $\partial K_{2\delta}=\{r=r_\pm\mp 2\delta\}$
are timelike; therefore, this problem has a unique solution (see for
example~\cite[Theorem~24.1.1]{ho3}. This solution is exponentially
decaying in time:
\begin{prop}\label{l:parametrix-estimate}
For $\delta>0,\nu>0$ small enough, $a$ small enough depending on $\delta$,
and every $s\geq 0$,
\begin{equation}\label{e:parametrix-estimate}
\|e^{\nu t_+}u_1\|_{H^{s+1}(M_\delta\setminus K_{2\delta})}\lesssim
\|e^{\nu t_+}f\|_{H^s(M_\delta\setminus K_{2\delta})}.
\end{equation}
\end{prop}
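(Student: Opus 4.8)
The plan is to rerun the red-shift energy argument of Proposition~\ref{l:redshift-intermediate}, but now on the domain $\Omega=M_\delta\setminus K_{2\delta}$, using the Dirichlet condition $u_1|_{\partial K_{2\delta}}=0$ to dispose of the interior boundary that produced the $K_\delta$-term there. Let $X$ be the red-shift field of Proposition~\ref{l:red-shift-X}, which is defined precisely on $M_\delta\setminus K_{2\delta}$, so that no cutoff $\chi$ is needed. For the base case $s=0$ I would apply the divergence theorem to $V=e^{2\nu t_+}J_X(u_1)$ on $\Omega_T=\Omega\cap\{t_+<T\}$. The boundary of $\Omega_T$ splits into the two spacelike faces $\{r=r_\pm\pm\delta\}$ and the top $\{t_+=T\}$ (spacelike or null), on which $X$ is timelike and future/outward-pointing, so the fluxes are $\geq0$ by Proposition~\ref{l:current-positive-integral}; and the timelike face $\partial K_{2\delta}=\{r=r_\pm\mp2\delta\}$, on which $u_1=0$ and, since $\pm Xr>0$, $X$ points into $\Omega$, so the flux is $\geq0$ by Proposition~\ref{l:morawetz-positive-integral}. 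Hence $\int_{\Omega_T}\Div V\geq0$, and letting $T\to+\infty$ and using $\Div V=e^{2\nu t_+}[2\nu\,dt_+(J_X(u_1))+(Xu_1)f+K^X(\nabla u_1,\nabla u_1)]$ together with the negative-definiteness of $K^X$, a weighted Poincar\'e inequality, and smallness of $\nu$ yields $\|e^{\nu t_+}u_1\|_{H^1(\Omega)}\lesssim\|e^{\nu t_+}f\|_{L^2(\Omega)}$. Crucially, no interior term survives: the Morawetz positivity on $\partial K_{2\delta}$ replaces it.

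For $s\geq1$ I would argue by induction, following the scheme of Proposition~\ref{l:redshift-intermediate} and~\eqref{e:red-shift-i1}. Commuting with $Y\in\{\partial_t\}\cup\{\text{Killing fields of }\mathbb S^2\}$ preserves the Dirichlet condition, since each such $Y$ is tangent to $\{r=\const\}$ and hence $Yu_1=0$ on $\partial K_{2\delta}$; applying the base estimate to $Yu_1$ and using that $[\Box_g,Y]$ is second order with $O(a)$ coefficients bounds the tangential derivatives, giving $\|e^{\nu t_+}\hat\partial u_1\|_{H^s(\Omega)}\lesssim\|e^{\nu t_+}f\|_{H^s(\Omega)}+O(a)\|e^{\nu t_+}u_1\|_{H^{s+1}(\Omega)}$, again with no interior term. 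The transverse derivative $\partial_r u_1$ would then be treated via the commutator $[\Box_g,\partial_r]=-\eta X\partial_r+L$ with $\eta\geq0$ near the horizons, exactly as there, so that $(\Box_g+\eta X)\partial_r u_1=\partial_r f+Lu_1$ is again amenable to the red-shift mechanism.

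The main obstacle is that, unlike the tangential fields, $\partial_r$ does not preserve the Dirichlet condition: $\partial_r u_1\neq0$ on $\partial K_{2\delta}$, so the clean Morawetz positivity fails for $\partial_r u_1$ and the flux of $J_X(\partial_r u_1)$ over the timelike face has an indefinite sign. I would resolve this by separating the two regimes with a cutoff $\zeta(r)$ equal to $1$ near the event horizons and supported away from $\partial K_{2\delta}$. On $\supp\zeta$ the red-shift commutation applies to $\zeta u_1$, which vanishes near $\partial K_{2\delta}$ so that no interior flux occurs, while the commutator $[\Box_g,\zeta]u_1$ is lower order and absorbed by the inductive hypothesis. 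On $\supp(1-\zeta)$, a neighborhood of $\partial K_{2\delta}$ where $\Delta_r>0$, the surfaces $\{r=\const\}$ are timelike and noncharacteristic, so $\Box_g$ has a nondegenerate $\partial_r^2$ term and the Dirichlet problem gains one derivative; the classical boundary regularity theory for the wave equation (cf.~\cite{ho3}) then controls $\partial_r u_1$ there in terms of $f$ and the already-estimated tangential derivatives. Gluing the two regions by a partition of unity and absorbing the $O(a)$ terms for $a$ small enough gives the $H^{s+1}$ bound. The delicate point throughout is reconciling the degeneracy of $\Box_g$ at the characteristic horizons, where red-shift supplies the missing transverse control, with the noncharacteristic interior boundary $\partial K_{2\delta}$, where ordinary regularity theory does.
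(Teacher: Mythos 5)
Your proof is correct and its overall architecture coincides with the paper's: the base case $s=0$ is the identical divergence-theorem argument with $V=e^{2\nu t_+}J_X(u_1)$ on $(M_\delta\setminus K_{2\delta})\cap\{t_+\leq T\}$, with red-shift positivity on $\{t_+=T\}$ and $\partial M_\delta$ and Morawetz positivity on the Dirichlet boundary $\partial K_{2\delta}$; and for higher $s$ both arguments split $M_\delta\setminus K_{2\delta}$ into a neighborhood of the horizons (handled by the higher-order red-shift estimate applied to a cutoff of $u_1$ vanishing near $\partial K_{2\delta}$, with the commutator absorbed) and a neighborhood of the timelike surface $\partial K_{2\delta}$ (handled by boundary regularity across a noncharacteristic boundary). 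The one genuine difference is the mechanism in the second region: you commute with the tangential Killing fields, which preserve the Dirichlet condition, and then recover the $\partial_r$-derivatives from the equation using the nondegeneracy of the $\partial_r^2$ coefficient where $\Delta_r>0$; the paper instead observes that $L=C_0\partial_t^2-\Box_g$ is \emph{elliptic} on $K_\delta\setminus K_{2\delta}$ for small $a$ (since $\partial_t$ is timelike there), applies elliptic Dirichlet regularity to $Lu_1=C_0\partial_t^2u_1-f$, and controls $\partial_t^2u_1$ by applying the proposition itself at level $s-2$ (hence a step-two induction, completed by interpolation). Both devices buy the same thing---one extra derivative transverse to the noncharacteristic interior boundary---but the paper's version is shorter because, rather than rerunning the tangential/radial commutation scheme of Proposition~\ref{l:redshift-intermediate}, it simply cites the already-proved Proposition~\ref{l:red-shift} for $\chi_\delta u_1$; that shortcut is equally available in your setup and would let you drop the $[\Box_g,\partial_r]=-\eta X\partial_r+L$ analysis entirely.
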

\begin{proof}
First, consider the case $s=0$. We argue as in the proof of
Proposition~\ref{l:redshift-intermediate}, using the vector field $X$
constructed in Proposition~\ref{l:red-shift-X}.  Namely, we apply the
divergence theorem to the vector field $V=e^{2\nu t_+}J_X(u_1)$ in the
region
$$
\Omega_T=(M_\delta\setminus K_{2\delta})\cap \{t_+\leq T\}.
$$
The flux of $V$ over $\{t_+=T\}$ and $\partial M_\delta$ is
nonnegative by Proposition~\ref{l:current-positive-integral}; the flux
over $\partial K_{2\delta}$ is nonnegative by
Proposition~\ref{l:morawetz-positive-integral}.  Computing the
divergence of $V$ by Proposition~\ref{l:energy-divergence}, we
get~\eqref{e:parametrix-estimate}.

Now, we assume that~\eqref{e:parametrix-estimate} is true for $s-2$
and prove it for $s$; the rest follows by induction and interpolation
in Sobolev spaces. For $a$ small enough, $\partial_t$
is timelike in $K_\delta\setminus K_{2\delta}$; therefore, for
large enough constant $C_0$, the operator
$$
L=C_0 \partial_t^2-\Box_g
$$
is elliptic on $K_\delta\setminus K_{2\delta}$. Since $u_1$ satisfies
the Dirichlet boundary condition on $\partial K_{2\delta}$, we have
$$
\begin{gathered}
\|e^{\nu t_+}u_1\|_{H^{s+1}(K_\delta\setminus K_{2\delta})}\lesssim
\|e^{\nu t_+}Lu_1\|_{H^{s-1}(M_\delta\setminus K_{2\delta})}
+\|e^{\nu t_+}u_1\|_{H^{s-1}(M_\delta\setminus K_{2\delta})}\\\lesssim
\|e^{\nu t_+}\partial_t^2u_1\|_{H^{s-1}(M_\delta\setminus K_{2\delta})}
+\|e^{\nu t_+}f\|_{H^{s-1}(M_\delta\setminus K_{2\delta})}\\\lesssim
\|e^{\nu t_+}\partial_t^2 f\|_{H^{s-2}(M_\delta\setminus K_{2\delta})}
+\|e^{\nu t_+}f\|_{H^{s-1}(M_\delta\setminus K_{2\delta})}\lesssim
\|e^{\nu t_+}f\|_{H^s(M_\delta\setminus K_{2\delta})}.
\end{gathered}  
$$
Here we applied ~\eqref{e:parametrix-estimate} to $u_1$ and $\partial_t^2u_1$
and used that $\Box_g$ commutes with $\partial_t^2$. 

Now, take a nonnegative function $\chi_\delta(r)\in C^\infty$ such that
$\chi_\delta=0$ near $K_{2\delta}$, but $\chi_\delta=1$ away from $K_\delta$.
We can use the above estimate and apply
Proposition~\ref{l:red-shift} to $\chi_\delta u_1$ to get
$$
\begin{gathered}
\|e^{\nu t_+}u_1\|_{H^{s+1}(M_\delta\setminus K_{2\delta})}\lesssim
\|e^{\nu t_+}u_1\|_{H^{s+1}(K_\delta\setminus K_{2\delta})}
+\|e^{\nu t_+}\chi_\delta u_1\|_{H^{s+1}(M_\delta)}\\\lesssim
\|e^{\nu t_+}f\|_{H^s(M_\delta\setminus K_{2\delta})}
+\|e^{\nu t_+}\chi_\delta u_1\|_{H^{s+1}(K_\delta)}
+\|e^{\nu t_+}[\Box_g,\chi_\delta]u_1\|_{H^s(K_\delta)}\\\lesssim
\|e^{\nu t_+}f\|_{H^s(M_\delta\setminus K_{2\delta})},
\end{gathered}
$$
as required.
\end{proof}
We are now ready to prove Theorem~\ref{l:integrated-xpd}.
Take $\chi_\delta$ from the proof of Proposition~\ref{l:parametrix-estimate}
and consider $u_2=u-\chi_\delta(r) u_1$.
Then
$$
\Box_g u_2=(1-\chi_\delta)f-[\Box_g,\chi_\delta]u_1
$$
is supported in $K_\delta$; moreover, by Proposition~\ref{l:parametrix-estimate},
$$
\|e^{\nu t_+}\Box_g u_2\|_{H^{s+\varkappa}(M_\delta)}
\lesssim \|e^{\nu t_+}f\|_{H^{s+\varkappa}(M_\delta)}.
$$
Therefore, we may apply Proposition~\ref{l:estimate-k-m} to $u_2$ to get
$$
\|e^{\nu t_+}(u_2-\Pi_0(\Box_g u_2))\|_{H^s(M_\delta)}
\lesssim \|e^{\nu t_+}f\|_{H^{s+\varkappa}(M_\delta)}.
$$
Note that $\Pi_0(\Box_g u_2)=\Pi_0f$, as
$$
\Pi_0\Box_g\chi_\delta(r) u_1
=\lim_{T\to +\infty}\int_{\partial (M\cap \{t_+\leq T\})}
g(\nabla(\chi_\delta(r)u_1),\vec n)\,dS.
$$
The integral over the cap $M\cap\{t_+=T\}$ converges to zero, as $u_1$
is exponentially decaying in time. As for the timelike piece $\partial
M\cap \{t_+\leq T\}$, the normal vector $\vec n$ is tangent to
$\partial M$ and we can use this to replace the integral of $g(\nabla
u_1,\vec n)$ over the timelike piece by a certain integral over the
spheres $\partial M\cap \{t_+=T\}$; the latter will decay
exponentially as $T\to +\infty$.  We now get
$$
\begin{gathered}
\|e^{\nu t_+}(u-\Pi_0f)\|_{H^s(M_\delta)}\lesssim
\|e^{\nu t_+}\chi_\delta u_1\|_{H^s(M_\delta)}
+\|e^{\nu t_+}(u_2-\Pi_0(\Box_g u_2))\|_{H^s(M_\delta)}\\\lesssim
\|e^{\nu t_+}f\|_{H^{s+\varkappa}(M_\delta)},
\end{gathered}
$$
which finishes the proof.

\noindent\textbf{Acknowledgements.}
I would like to thank Daniel Tataru for suggesting that the results
of~\cite{skds} can lead to improved energy estimates and for providing
several key ideas in the proof. Thanks also to Maciej Zworski for
helpful discussions and to Kiril Datchev and Andr\'as Vasy for several
enlightening discussions on~\cite{m-sb-v} and~\cite{d-v}. I am also
grateful for partial support from NSF grant DMS-0654436. Finally, I
am thankful to an anonymous referee for careful reading and
suggestions to improve the manuscript.


\end{document}